\theoremstyle{plain}
\newtheorem{theo}{Theorem}
\newtheorem{defin}[theo]{Definition}
\newtheorem{lemma}[theo]{Lemma}
\newtheorem{coro}[theo]{Corollary}
\newcommand{\pdo}{\Psi{\rm DO}}
\title{ K-Theories for Classes of Infinite Rank Bundles}
\author{Andr\'es Larra\'in-Hubach}
\thanks{ The author was supported by a GAANN Fellowship from the US Department of Education.}
\address{Department of Mathematics and Statistics, Boston University, Boston, MA, USA}
\email{alh@bu.edu}
\keywords{ K-theory, Chern Character, Pseudodifferential Operator.}
\subjclass[2000]{19L99, 57R22, 58J40}  % 2010 should work
\begin{document}
\bibliographystyle{plain}

\begin{abstract}
Several authors have recently constructed characteristic classes for classes of  infinite rank vector bundles appearing in topology and physics.  These include
the tangent bundle to the space of maps between closed manifolds, the infinite rank bundles in the families index theorem, and bundles with
pseudodifferential operators as structure group.
In this paper, we construct the corresponding K-theories for these types of bundles.
We
develop the formalism of these theories and use their Chern character
to
detect a large class of nontrivial elements.
\end{abstract}

\maketitle

\section{\textbf{Introduction}}
In this paper, we define   K-theories for three classes of infinite rank vector bundles called gauge bundles, pseudodifferential bundles,  and families bundles.   For example, gauge bundles appear naturally when studying the tangent bundle to the
space of maps between closed manifolds, e.g. in string theory.  Constructing the  Levi-Civita connection on these tangent bundles forces  the extension of the structure group to a group of bounded
pseudodifferential operators  ($\pdo$s) \cite{RMT}.  Families bundles arise in the setup of the Atiyah-Singer families index theorem, as mentioned in \cite{AS4} and used by Bismut in his local
proof of the families index theorem \cite[Ch. 10]{BGV}; the families index theorem has been used to detect anomalies in quantum field theory \cite{AS, BF}.

In \cite{ALH1}, \cite{Pay}, so-called leading order Chern classes  were defined and used to
find   nontrivial examples of gauge and pseudodifferential bundles. The existence of a leading order
 Chern character is the  motivation for this paper, as there should exist K-theories corresponding to  these
 bundles as the natural domain of this Chern character.

There is a well known difficulty to constructing K-theory for infinite rank bundles with Hilbert
space fibers:
Hilbert bundles over CW complexes are trivial, because the structure
group ${\rm GL}(\mathcal H)$
of invertible bounded linear operators on a complex Hilbert space $\mathcal H$
 is contractible. However,
topologically or geometrically interesting subgroups
of ${\rm GL}(\mathcal H)$ may have nontrivial topology, and so may lead to interesting
K-theories.
In \cite{RMT}, \cite{Pay}, certain Hilbert bundles with restricted structure groups are defined, and  nontrivial examples appear in \cite{ALH1}. To construct these bundles, copies of a
Sobolev space $\Gamma^s(N,E)$ of sections of a finite rank vector bundle $E$
over a closed manifold $N$ are glued over a CW complex $X$, using elements of the invertible zero order
$\pdo$s  $\Psi_0^*(N,E)$ as transition maps. We call the resulting Hilbert bundles {\it pseudodifferential bundles}.  As a special case, we can glue copies of $\Gamma^s(N,E)$
using smooth gauge transformations of $E$.  We call these bundles {\it gauge bundles}.

To form characteristic classes for $\pdo$ and gauge
bundles by Chern-Weil theory, we need a trace
on the
Lie algebra $\Psi_0(N,E)$ of the structure group consisting of  zero order $\pdo$s.  These
traces are basically of two types: the Wodzicki residue and the leading order trace (given
by integrating the leading order symbol over the cosphere bundle) \cite{LN}. Using the Wodzicki residue, one can define ``Wodzicki-Chern classes";   however,  these always vanish \cite{ALH2}. The leading order trace gives rise to ``leading order Chern classes."
Several examples of bundles with nontrivial leading order classes are known, which
indicates that the corresponding K-theories $K^\Psi$ of $\pdo$-bundles
and $K^{\mathcal G}$ of gauge bundles should be nontrivial.

Neither of these K-theories handles the infinite rank bundles that arise in the families index
theorem.  As explained in \cite{AS4}, the appropriate structure group is
 $\textrm{Diff}(N,E)$, consisting of pairs $(\phi,f)$, where $\phi$ is a diffeomorphism of $N$
 and $f$ is a bundle isomorphism of $E$ covering
   $\phi$. Note that the gauge group of $E$ is the subgroup of $\textrm{Diff}(N,E)$ where
   $\phi = {\rm Id},$ and that $\textrm{Diff}(N,E)$ is a subgroup of
   ${\rm GL}(\Gamma^s(N,E)).$  The corresponding K-theory
   of {\it families bundles} is denoted $K^{\textrm{Diff}}$.

In \S2, we set up the foundations for $K^\mathcal{G}$ and $K^{\textrm{Diff}}$.  Once the
topology of  the structure groups is fixed, the constructions are fairly straightforward.    The
main point is to let our bundles $E\to N$ vary over both $E$ and $N$ in order to form good
sums and products.
 In Lemmas \ref{lemimp1} and \ref{Ktheqgau}, we show that  $K^{\textrm{Diff}}$ and $K^\mathcal{G}$ are isomorphic to specific ordinary K-theory rings. For example, an element of $K^\mathcal{G}(X)$ can be
 represented by an element of $K(X\times N)$ for some $N$. These results are related to the
 generalized caloron construction in \cite{HMV}. In Theorem \ref{omegaspecimp}, we show that $K^\mathcal{G}$ is the first term of a generalized cohomology theory which does not have Bott periodicity. In Theorems  \ref{Thom} and
 \ref{Serre Swan},  we get analogues of the  Thom isomorphism and the Serre-Swan theorems.

 In \S3, we compare the leading order Chern character of an element of $K^\mathcal{G}(X)$ to the ordinary
 Chern character of the corresponding element in $K(X\times N).$  This is used to detect
 nontrivial elements in $K^\mathcal{G}(X).$

 In \S4, we define the ring $K^\Psi$ for $\pdo$ bundles.  Again, we can use the leading order
 Chern character to show that $K^\Psi(X)$ is large.  More precisely,
 for any $N$ as above, in Theorem \ref{lastthm} and Corollary \ref{finalcoro} we show that almost all of $K(X\times N)$ injects into  $K^\Psi(X)$.

\textbf{Acknowledgements.} I would like to thank  Steve Rosenberg for his invaluable help in improving the content and  presentation of  this paper.

\section{\textbf{The K-theory groups $K^{\textrm{Diff}}$ and $K^\mathcal{G}$}}
In this section we construct the rings $K^{\textrm{Diff}}$ and $K^\mathcal{G}$. For a compact CW complex $X$, we completely characterize
$K^{\textrm{Diff}}(X), K^\mathcal{G}(X)$ in terms of ordinary K-theory rings. $K^\mathcal{G}$ will also be used in \S4 to construct nontrivial elements in the  more complicated K-theory $K^\Psi$.
We also establish some fundamental properties of  $K^{\textrm{Diff}}$ and $K^\mathcal{G}$,
including their
extension to a generalized cohomology theory, the Thom isomorphism and a Serre-Swan theorem.
\subsection{\textbf{The Groups $\textrm{Diff}$ and $\mathcal{G}$}}
Throughout this paper, $N$ is a closed orientable Riemannian manifold  and $E\to N$ is a finite rank hermitian vector bundle. Define the group $\textrm{Diff}(N,E)$ to be pairs of maps $(\phi,f)$, with $\phi\in \textrm{Diff}(N)$,  $f\in \textrm{Diff}(E)$ is  linear on the fibers, and such that the following diagram commutes
\begin{align*}
\begin{diagram}
\node{E}\arrow{s,l}{}\arrow{e,t}{f}\node{E}\arrow{s,r}{}\\
\node{N}\arrow{e,t}{\phi}\node{N}
\end{diagram}
\end{align*}
$\textrm{Diff}(N,E)$ is a topological group  with the Fr\'echet topology \cite{AS4}.

Let $\Gamma^s(N,E)$ be the Sobolev completion of the space of smooth sections of $E$ with
respect to a fixed parameter $s$ large enough
  so that elements of  $\Gamma^s(N,E)$ are continuous. An element $(\phi,f)$ acts linearly on $r\in \Gamma^s(N,E)$ by
\begin{equation}\label{one}((\phi,f)\cdot r)(y)=f\cdot r(\phi^{-1}(y)),
\end{equation}
for any $y\in N$.

Take $((\phi_n,f_n))_{n=1}^\infty$ a sequence in $\textrm{Diff}(N,E)$ converging in the Fr\'echet topology to $(\phi,f)$. It is immediate that for $r\in \Gamma^s(N,E)$, $(\phi_n,f_n)r\to (\phi,f)r$ in the $\Gamma^s$ topology. Likewise, if $r_n\to r$, in the $\Gamma^s$ topology, then $(\phi,f)r_n\to (\phi,f)r$, in the $\Gamma^s$ topology. This shows that  $\textrm{Diff}(N,E)$  injects continuously into the space of bounded invertible operators
${\rm GL}(\Gamma^s(N,E))$ on $\Gamma^s(N,E)$ with the norm topology. The group $\mathcal{G}(N,E)=\{(\textrm{id},f)\in \textrm{Diff} (N,E)\}$, of smooth gauge transformations of $E\to N$ is a closed normal subgroup of $\textrm{Diff}(N,E)$.

We can also consider the topology of uniform convergence on $\textrm{Diff} (N,E)$. In this case, $\textrm{Diff} (N,E)$ is dense in the group $\textrm{Homeo}(N,E)$ of pairs $(\phi,f)$ as above, but with $\phi$ and $f$ homeomorphisms. For the uniform topology, $\textrm{Homeo}(N,E)$ injects continuously into  ${\rm GL}(\Gamma^s(N,E))$
 with the norm topology. With the uniform topology, $\mathcal{G}(N,E)$
 is a nonclosed  subgroup of $\textrm{Diff} (N,E)$.
  The uniform topology is needed in \S\ref{symbsubsec} to get a continuous symbol map.

\subsection{\textbf{Infinite Rank Bundles Related to Fibrations and $K^{\textrm{Diff}}$}}

Let $M\stackrel{\pi}{\to} X$ be a locally trivial fibration, where $X$ and $M$ are closed, oriented manifolds,
with fibers diffeomorphic to a fixed closed oriented manifold $N$. A finite rank
vector bundle $\mathbb{E}\to M$ induces an infinite rank Fr\'{e}chet bundle $\pi_*(\mathbb{E})\to X$: given $b\in X$, the fiber of $\pi_*(\mathbb{E})$ over $b$ is  $\Gamma(M_b,\mathbb{E}_b)$, the space of smooth sections of the vector bundle $\mathbb{E}_b
= \mathbb{E}|_{\pi^{-1}(b)}\to M_b= \pi^{-1}(b)$ over $b$. Each bundle $\mathbb{E}_b\to M_b$ is noncanonically isomorphic to a fixed vector bundle $E\to N$, called the \textit{local model}. The transition maps for $\pi_*(\mathbb{E})$ are in  $\textrm{Diff}(N,E)$. There is a canonical identification \cite[p. 277]{BGV}
\begin{align}\label{corrsect}
\Gamma(X,\pi_*(\mathbb{E}))\cong\Gamma(M,\mathbb{E}).
\end{align}

It is easier to work with Hilbert bundles. Fix a hermitian metric on $\mathbb{E}$ and a metric on $M$, and redefine $\pi_*(\mathbb{E})$ to be the bundle whose fibers are the
Sobolev sections $\Gamma^s(M_b,\mathbb{E}_b)$. $\pi_*(\mathbb{E})$ is  a Hilbert bundle with fibers isomorphic to $\Gamma^s(N,E)$. Even though we are working with Hilbert bundles, the transition maps lie  in  the restricted subgroup $\textrm{Diff}(N,E)
\subset {\rm GL}(\Gamma^s(N,E))$, which has a highly nontrivial topology.  Thus
$\pi_*(\mathbb{E})$ may be nontrivial as a $\textrm{Diff}(N,E)$ bundle.

More generally, let  $X$ be a CW-complex and $\{U_\alpha\}$ an open covering of $X$. Let
$\{(\phi_{\alpha\beta},f_{\alpha\beta}):U_\alpha\cap U_\beta\to\textrm{Diff}(N,E)\} $
be a cocycle; that is, for $b\in U_\alpha\cap U_\beta\cap U_\gamma$,  $$(\phi_{\alpha\beta},f_{\alpha\beta})\cdot(\phi_{\beta\gamma},
f_{\beta\gamma})=
(\phi_{\alpha\gamma},f_{\alpha\gamma}).$$
We glue $U_\alpha\times \Gamma^s(N,E)$ to $U_\beta\times \Gamma^s(N,E)$ via the transition maps $(\phi_{\alpha\beta},f_{\alpha\beta})$ and call the resulting Hilbert bundle $\mathcal{E}\to X$.  Simultaneously,  the $\{\phi_{\alpha\beta}\}$ glue the $U_\alpha \times
N$ to  a fibration $M\to X$ with fibers diffeomorphic to $N$, and the $\{f_{\alpha\beta}\}$
similarly determine a finite rank vector bundle $\mathbb{E}\to M$. It is easy to see that $\mathcal{E}=\pi_*(\mathbb{E})$. The bundle $\mathcal{E}$ will be called a
{\it families bundle.}

Given families bundles $\pi_*(\mathbb{E})$ and $\pi_*(\mathbb{F})$ over $X$, we only
consider morphisms that are induced by bundle maps from $\mathbb{E}$ to $\mathbb{F}$. In particular, if  $\Xi:\pi_*(\mathbb{E})\to \pi_*(\mathbb{F})$ is an isomorphism, then $\Xi$ is induced from a bundle isomorphism
$\Xi':\mathbb{E}\to \mathbb{F}$, and   $\Xi^{-1}$ is induced from $(\Xi')^{-1}$; thus
$\mathbb{E}$ and $\mathbb{F}$ are isomorphic. We define  $\textrm{Vect}_M^{\textrm{Diff}}(X)$  to be the category of isomorphism classes of vector bundles of the form $\pi_*(\mathbb{E})$, for $\mathbb{E}\in {\rm Vect}(M)$, the category of  isomorphism classes of vector bundles over $M$. Note that  $M$ is fixed. It follows that there is an equivalence of categories between $\textrm{Vect}_M^{\textrm{Diff}}(X)$ and $\textrm{Vect}(M)$.

The transition maps of bundles in $\textrm{Vect}_M^{\textrm{Diff}}(X)$ lie in the group
\begin{align*}
\textrm{Diff}_N(X)=\prod_{E\in \textrm{Vect}(N)}\textrm{Diff}(N,E),
\end{align*}
but a particular bundle has transition maps with only one fixed component not of the form
$({\rm id}, {\rm id}).$

We  add $\pi_*(\mathbb{E})$ and $\pi_*(\mathbb{F})$ in $\textrm{Vect}_M^{\textrm{Diff}}(X)$ by defining $\pi_*(\mathbb{E})\oplus\pi_*(\mathbb{F})=\pi_*(\mathbb{E}\oplus\mathbb{F})$. If $\{(\phi_{\alpha\beta},e_{\alpha\beta})\}$ and $\{(\phi_{\alpha\beta},f_{\alpha\beta})\}$ define $\pi_*(\mathbb{E})$ and $\pi_*(\mathbb{F})$ respectively, then the transition maps for
$\pi_*(\mathbb{E})\oplus\pi_*(\mathbb{F})$
are $\{(\phi_{\alpha\beta},e_{\alpha\beta}\oplus f_{\alpha\beta})\}$. The tensor product is defined by
$\pi_*(\mathbb{E})\otimes\pi_*(\mathbb{F})=\pi_*(\mathbb{E}\otimes\mathbb{F})$,
with transition maps $\{(\phi_{\alpha\beta},e_{\alpha\beta}\otimes f_{\alpha\beta} )\}$. The fibers of $\pi_*(\mathbb{E})\otimes\pi_*(\mathbb{F})$ are isomorphic to $\Gamma^s(N,E\otimes F)$, where $E\to N$ and $F\to N$ are the local models of $\mathbb{E}$ and $\mathbb{F}$, respectively. Notice that this is not the same as taking the tensor product of the fibers of $\pi_*(\mathbb{E})$ and $\pi_*(\mathbb{F})$. With this sum and tensor product, $\textrm{Vect}_M^{\textrm{Diff}}(X)$ becomes an abelian semiring and $\pi_*:\textrm{Vect}(M)\to \textrm{Vect}_M^{\textrm{Diff}}(X)$ a semiring homomorphism. The usual Grothendieck construction gives an abelian ring denoted by  $K^{\textrm{Diff}}_M(X)$. The following result is valid using the Fr\'echet or the uniform topology on $\textrm{Diff}_N(X)$.

\begin{lemma}\label{lemimp1}
Let $X$ be compact. There is a ring isomorphism
\begin{align*}
\pi_*: K(M)\to K^{\rm{Diff}}_M(X).
\end{align*}
\end{lemma}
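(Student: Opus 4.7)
The plan is to derive the lemma directly from the equivalence of categories between $\textrm{Vect}_M^{\textrm{Diff}}(X)$ and $\textrm{Vect}(M)$ noted just before the statement, promote it to a semiring isomorphism, and then apply the universal property of the Grothendieck construction to upgrade it to a ring isomorphism.

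First, I would verify that $\pi_*:\textrm{Vect}(M)\to\textrm{Vect}_M^{\textrm{Diff}}(X)$ descends to a bijection on isomorphism classes. Essential surjectivity is built into the definition of $\textrm{Vect}_M^{\textrm{Diff}}(X)$, since every object is presented as some $\pi_*(\mathbb{E})$. Injectivity on isomorphism classes is precisely the content of the morphism convention imposed in the paragraph preceding the lemma: an isomorphism $\Xi:\pi_*(\mathbb{E})\to\pi_*(\mathbb{F})$ is by fiat induced from a bundle map $\Xi':\mathbb{E}\to\mathbb{F}$, and its inverse is induced from $(\Xi')^{-1}$, so $\mathbb{E}\cong\mathbb{F}$ as finite rank bundles over $M$.

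Second, I would check that $\pi_*$ respects the semiring structure. The identities $\pi_*(\mathbb{E})\oplus\pi_*(\mathbb{F})=\pi_*(\mathbb{E}\oplus\mathbb{F})$ and $\pi_*(\mathbb{E})\otimes\pi_*(\mathbb{F})=\pi_*(\mathbb{E}\otimes\mathbb{F})$ are literally the definitions of $\oplus$ and $\otimes$ in $\textrm{Vect}_M^{\textrm{Diff}}(X)$. The additive zero and the multiplicative unit (the pushforward of the trivial line bundle on $M$) are similarly preserved. Hence $\pi_*$ is a semiring isomorphism at the level of isomorphism classes.

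Third, I would invoke the Grothendieck completion, which is a functor from abelian semirings to abelian rings and therefore sends the semiring isomorphism to a ring isomorphism $K(M)\to K_M^{\textrm{Diff}}(X)$. Compactness of $X$ is used only to ensure that $K(M)$ (note $M$ is compact since it fibers over the compact $X$ with closed fiber $N$) admits its standard description as formal differences of vector bundle classes, so no analytic subtlety about stabilization intrudes.

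The only real obstacle is the bijectivity on isomorphism classes, i.e.\ that the restriction on morphisms in $\textrm{Vect}_M^{\textrm{Diff}}(X)$ is strong enough to prevent extraneous identifications between non-isomorphic $\mathbb{E}$ and $\mathbb{F}$; once the explicit argument from the paragraph above the lemma is in place, the remainder reduces to formal bookkeeping with the Grothendieck functor.
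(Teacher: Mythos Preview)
Your proposal is correct and follows essentially the same route as the paper: both arguments rest on the fact that $\pi_*$ is a bijection on isomorphism classes (from the morphism convention you cite) and respects $\oplus$ and $\otimes$, hence induces a ring isomorphism after Grothendieck completion. The only cosmetic difference is that the paper verifies injectivity and surjectivity at the level of formal differences by hand (using the stabilization $\pi_*(\mathbb{E}\oplus\mathbb{H})\cong\pi_*(\mathbb{F}\oplus\mathbb{H})\Rightarrow\mathbb{E}\oplus\mathbb{H}\cong\mathbb{F}\oplus\mathbb{H}$), whereas you invoke functoriality of the Grothendieck construction on semirings; these are two phrasings of the same argument.
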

Here, $K(M)$ denotes the usual K-theory ring of $M$. In particular, for $X$ compact, $K^{\rm{Diff}}_M(X)$ is an abelian ring with unit.
\begin{proof}
Since $X$ and $N$ are compact,  $M$
is  compact space. Elements of $K(M)$ are formal differences $[\mathbb{E}]-[\mathbb{F}]$ for $\mathbb{E},\,\mathbb{F}\in\textrm{Vect}(M)$.
 The $\pi_*$ functor gives $[\pi_*(\mathbb{E})]-[\pi_*(\mathbb{F})] \in K^{\textrm{Diff}}_M(X)$, and since every element of $K^{\textrm{Diff}}_M(X)$ is of this form, $\pi_*$ is surjective.

If $[\pi_*(\mathbb{E})]-[\pi_*(\mathbb{F})]=0$ then there is a bundle $\pi_*(\mathbb{H})$ such that
$$\pi_*(\mathbb{E}\oplus\mathbb{H})\cong\pi_*(\mathbb{E})\oplus\pi_*(\mathbb{H})
\cong \pi_*(\mathbb{F})\oplus\pi_*(\mathbb{H})\cong\pi_*(\mathbb{F}\oplus\mathbb{H}),
$$
so $\mathbb{E}\oplus\mathbb{H}\cong\mathbb{F}\oplus\mathbb{H}$ and $[\mathbb{E}]-[\mathbb{F}]=0$ in $K(M)$.
\end{proof}

Note that if $N=\{\rm{pt}\}$ is a point, then $M=X$ and $\textrm{Vect}_M^{\textrm{Diff}}(X)=\textrm{Vect}(X)$.  In this case, $K^{\textrm{Diff}}_M(X)$ is just $K(X)$.

Define $K^{\textrm{Diff}}(X)$ by
\begin{align*}
K^{\textrm{Diff}}(X)=\bigotimes_{M\to X}K^{\textrm{Diff}}_M(X),
\end{align*}
where the tensor product of rings is taken over all possible diffeomorphism classes
$\Lambda$ of locally trivial smooth fibrations $M\to X$. An element $\otimes_{i\in \Lambda}a_i\in K^{\textrm{Diff}}(X)$ has almost  all $a_i$ equal to the identity.  Thus
a general element of $K^{\textrm{Diff}}(X)$  is a finite sum of elements of the form $a_1\otimes\ldots\otimes a_k$ with $a_j\in K^{\textrm{Diff}}_{M_j}(X)$ for a fibration $M_j\to X$. Tensor products in $K^{\textrm{Diff}}(X)$ are taken componentwise.

\begin{coro}
Let $X$ be compact. There is a ring isomorphism
\begin{align*}
K^{\rm{Diff}}(X)\simeq \bigotimes_{M\to X} K(M).
\end{align*}
\end{coro}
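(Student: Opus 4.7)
The plan is essentially to assemble the isomorphisms of Lemma \ref{lemimp1} componentwise into a single isomorphism of the restricted tensor products. By definition,
\begin{align*}
K^{\textrm{Diff}}(X) = \bigotimes_{M \to X} K^{\textrm{Diff}}_M(X),
\end{align*}
where the tensor product runs over diffeomorphism classes of locally trivial fibrations $M \to X$ and a pure tensor is constrained so that almost all factors equal the multiplicative identity of $K^{\textrm{Diff}}_{M_i}(X)$. Lemma \ref{lemimp1} supplies, for each such $M$, a ring isomorphism $\pi_* : K(M) \to K^{\textrm{Diff}}_M(X)$.

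First I would record that $\pi_*$, being a ring isomorphism, sends the unit $1_{K(M)} = [\mathbb{C}_M]$ (the class of the trivial line bundle over $M$) to the unit of $K^{\textrm{Diff}}_M(X)$. This is the key compatibility needed to pass to the restricted tensor product: a pure tensor $\bigotimes_i \xi_i \in \bigotimes_{M \to X} K(M)$ with almost all $\xi_i = 1_{K(M_i)}$ is mapped, componentwise, to $\bigotimes_i \pi_*\xi_i$, which still has almost all components equal to $1_{K^{\textrm{Diff}}_{M_i}(X)}$, so the image lies in the correctly restricted tensor product defining $K^{\textrm{Diff}}(X)$.

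Next I would invoke the universal property of tensor products of (commutative) rings. The collection of ring homomorphisms $\{\pi_* : K(M) \to K^{\textrm{Diff}}_M(X)\}_{M \to X}$ induces a unique ring homomorphism
\begin{align*}
\Pi : \bigotimes_{M \to X} K(M) \longrightarrow \bigotimes_{M \to X} K^{\textrm{Diff}}_M(X) = K^{\textrm{Diff}}(X).
\end{align*}
Applying the same construction to the inverse isomorphisms $\pi_*^{-1}$ gives a two-sided inverse to $\Pi$, so $\Pi$ is a ring isomorphism.

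The only point requiring any care, and hence the \emph{main} (though still mild) obstacle, is the bookkeeping of the restricted tensor product: one must check that both $\Pi$ and its inverse preserve the ``almost all factors equal to the identity'' condition that cuts out $K^{\textrm{Diff}}(X)$ inside the naive tensor product. Since $\pi_*$ and $\pi_*^{-1}$ are unital ring maps for each $M$, this is automatic from the first step, and the corollary follows.
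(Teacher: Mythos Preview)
Your proposal is correct and matches the paper's approach: the paper states the corollary without proof, since it follows immediately from Lemma \ref{lemimp1} and the definition $K^{\textrm{Diff}}(X)=\bigotimes_{M\to X}K^{\textrm{Diff}}_M(X)$. Your write-up simply makes explicit the componentwise application of $\pi_*$ and the (trivial) check that unitality of $\pi_*$ respects the restricted tensor product; this is exactly the implicit argument the paper has in mind.
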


\subsection{\textbf{$K^\mathcal{G}$-Theory.}}
The construction in \S2.2 can be repeated for the subgroup   $\mathcal{G}(N,E)$ of gauge
transformations. For reasons explained later, we only consider the uniform topology.
The associated fibration is trivial, $M=X\times N$, and the induced bundles, called
{\it $\mathcal{G}$-vector bundles} or {\it gauge bundles}, are of the form $\mathcal{E}=\pi_*(\mathbb{E})$ for $\mathbb{E}\to X\times N$ a finite rank complex vector bundle. The set of isomorphism classes of $\mathcal{G}$-vector bundles is denoted by $\textrm{Vect}^\mathcal{G}_N(X)$. As before, there is an equivalence of categories between $\textrm{Vect}^\mathcal{G}_N(X)$ and $\textrm{Vect}(X\times N)$.

Set
\begin{align*}
\mathcal{G}_N=\prod_{E\in \textrm{Vect}(N)}\mathcal{G}(N,E).
\end{align*}
Every $\mathcal{G}$-vector bundle will have its transition functions in one term of this disjoint union. Since the base $N$ is fixed, we denote $\mathcal{G}_N$ by $\mathcal{G}$.

The previous lemma holds in this context:
\begin{lemma}\label{Ktheqgau}
Let $X$ be compact. There is a ring isomorphism
\begin{align*}
\pi_*: K(X\times N)\to K_N^\mathcal{G}(X).
\end{align*}
\end{lemma}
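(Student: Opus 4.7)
The plan is to mimic the proof of Lemma \ref{lemimp1} almost verbatim, with $M = X \times N$ (and $\pi: X \times N \to X$ the projection) playing the role of the fibration. The key input is the equivalence of categories between $\textrm{Vect}^{\mathcal{G}}_N(X)$ and $\textrm{Vect}(X \times N)$ already asserted in the excerpt: every gauge bundle has the form $\pi_*(\mathbb{E})$ for some $\mathbb{E} \to X \times N$, and any morphism of gauge bundles is induced by a morphism of the underlying finite rank bundles over $X \times N$. Since $X$ and $N$ are compact, so is $X \times N$, and ordinary K-theory $K(X \times N)$ behaves as usual.

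First I would check that $\pi_*$ is a well-defined ring homomorphism. This is immediate from the definitions of $\oplus$ and $\otimes$ in $\textrm{Vect}^{\mathcal{G}}_N(X)$ transported from the $\textrm{Diff}$ case: $\pi_*(\mathbb{E} \oplus \mathbb{F}) = \pi_*(\mathbb{E}) \oplus \pi_*(\mathbb{F})$ and $\pi_*(\mathbb{E} \otimes \mathbb{F}) = \pi_*(\mathbb{E}) \otimes \pi_*(\mathbb{F})$, so the Grothendieck construction extends $\pi_*$ to the K-groups.

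Next, surjectivity: every class in $K^{\mathcal{G}}_N(X)$ is a formal difference $[\pi_*(\mathbb{E})] - [\pi_*(\mathbb{F})]$, which is by definition $\pi_*([\mathbb{E}] - [\mathbb{F}])$.

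For injectivity, suppose $[\pi_*(\mathbb{E})] - [\pi_*(\mathbb{F})] = 0$ in $K^{\mathcal{G}}_N(X)$. Then there exists $\mathbb{H} \to X \times N$ with $\pi_*(\mathbb{E}) \oplus \pi_*(\mathbb{H}) \cong \pi_*(\mathbb{F}) \oplus \pi_*(\mathbb{H})$ as gauge bundles, i.e.\ $\pi_*(\mathbb{E} \oplus \mathbb{H}) \cong \pi_*(\mathbb{F} \oplus \mathbb{H})$. Applying the equivalence of categories $\pi_*: \textrm{Vect}(X \times N) \to \textrm{Vect}^{\mathcal{G}}_N(X)$, this isomorphism is induced by a bundle isomorphism $\mathbb{E} \oplus \mathbb{H} \cong \mathbb{F} \oplus \mathbb{H}$ over $X \times N$, so $[\mathbb{E}] - [\mathbb{F}] = 0$ in $K(X \times N)$.

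The only point that requires any care is the injectivity-of-morphisms step, i.e.\ that an isomorphism $\Xi: \pi_*(\mathbb{E} \oplus \mathbb{H}) \to \pi_*(\mathbb{F} \oplus \mathbb{H})$ of gauge bundles really does come from a finite rank bundle isomorphism. This is exactly the analogue of the observation made for families bundles just before the definition of $\textrm{Vect}^{\textrm{Diff}}_M(X)$ and is the content of the equivalence of categories the author invokes; once that is granted, the proof is purely formal. Since the entire argument is parallel to Lemma \ref{lemimp1}, I would simply write a short proof stating that the argument goes through verbatim with $M = X \times N$ and appealing to the equivalence of categories.
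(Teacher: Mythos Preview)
Your proposal is correct and is exactly what the paper does: the lemma is stated immediately after the sentence ``The previous lemma holds in this context,'' with no separate proof given, so the intended argument is precisely the verbatim repetition of the proof of Lemma~\ref{lemimp1} with $M=X\times N$ that you outline. Your explicit identification of the one nontrivial point (that isomorphisms of gauge bundles come from isomorphisms of the underlying finite rank bundles, i.e.\ the equivalence of categories) is the right thing to flag.
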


For example, if $N=S^1$ and $X$ is compact, then
\begin{align*}
K_{S^1}^\mathcal{G}(X)=K(X\times S^1)\cong K^0(X)\oplus K^1(X),
\end{align*}
\cite[p. 110]{EfP}.

The group $\mathcal{G}(N,E)$ with the uniform topology is not a topological group, but it is uniformly dense in the topological group of continuous gauge transformations of $E$ with the uniform topology. We can construct $\mathcal{G}$-vector bundles as before, using continuous gauge transformations as transition maps. (By standard approximations,  the transition maps of any of these bundles can be assumed to be smooth.) In contrast to
smooth gauge transformations, the group of  continuous gauge transformations, also denoted by $\mathcal{G}(N,E)$ for the rest of this section, is a topological group and has
 an
 explicit model for its classifying space \cite{AB1}:
 \begin{align*}
B\mathcal{G}(N,E)=\textrm{Maps}_E(N,BU),
\end{align*}
where $\textrm{Maps}_E(N,BU)=\{f:N\to BU, f^*(EU)=E\}$. This
is stated in \cite{AB1} for principal bundles, but holds for vector bundles associated to
the  faithful representation (\ref{one}). In summary, we switch to  continuous
gauge transformations in this section to use the
Atiyah-Bott construction, but just as with ordinary bundles, it  makes no difference if we consider smooth or continuous gauge transformations as transition maps.

For $f\in \textrm{Maps}_E(N,BU)$, every map homotopic to $f$ is also in $\textrm{Maps}_E(N,BU)$, so $\textrm{Maps}_E(N,BU)$ is a path connected component of $\textrm{Maps}(N,BU)$.
There is a bijection from $\textrm{Vect}^\mathcal{G}_N(X)$ to $[X,B\mathcal{G}]$,
the set of homotopy classes of maps from $X$ to  $B\mathcal{G} = B\mathcal{G}(N,E)$.
Thus
$$
B\mathcal{G}=\coprod_{E\in \textrm{Vect}(N)}B\mathcal{G}(N,E)
=\coprod_{E\in \textrm{Vect}(N)}\textrm{Maps}_E(N,BU).$$
Since every map $f:N\to BU$ lies in some component $\textrm{Maps}_F(N,BU)$,
\begin{align}
B\mathcal{G}= \textrm{Maps}(N,BU).
\end{align}
\begin{theo}
There is a bijective correspondence
\begin{align*}
K^\mathcal{G}_N(X)\simeq[X,B\mathcal{G}\times \mathbb{Z}],
\end{align*}
for $\mathcal{G} = \mathcal{G}(N,E).$
\end{theo}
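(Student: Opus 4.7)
The plan is to chain together Lemma \ref{Ktheqgau}, the classical representability of $K$-theory, the exponential law for mapping spaces, and the Atiyah--Bott identification $B\mathcal{G} = \textrm{Maps}(N,BU)$ given just above the statement.

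First I would invoke Lemma \ref{Ktheqgau} to replace $K^\mathcal{G}_N(X)$ by the ordinary K-theory ring $K(X\times N)$. Since $X$ and $N$ are compact and $X\times N$ is a compact CW-complex, the standard classifying space result for topological K-theory gives a bijection
\begin{align*}
K(X\times N) \;\cong\; [X\times N,\; BU\times\mathbb{Z}].
\end{align*}
Using that $[Y, A\times B] = [Y,A]\times [Y,B]$, I split this as $[X\times N, BU]\times [X\times N, \mathbb{Z}]$.

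Next I would apply the exponential law. Because $N$ is a compact Hausdorff space and $BU$ has the homotopy type of a CW-complex, the adjunction between product and mapping space yields a natural bijection
\begin{align*}
[X\times N, BU] \;\cong\; [X,\; \textrm{Maps}(N, BU)] \;=\; [X, B\mathcal{G}],
\end{align*}
where the last equality uses the identification $B\mathcal{G} = \textrm{Maps}(N,BU)$ established before the statement. For the second factor, since $N$ is connected (being a closed manifold), any continuous map $X\times N\to \mathbb{Z}$ is constant in the $N$ direction, so $[X\times N, \mathbb{Z}]\cong [X,\mathbb{Z}]$. Reassembling,
\begin{align*}
[X, B\mathcal{G}]\times [X,\mathbb{Z}] \;\cong\; [X,\; B\mathcal{G}\times \mathbb{Z}],
\end{align*}
which combined with the previous identifications gives the desired bijection $K^\mathcal{G}_N(X)\simeq [X, B\mathcal{G}\times\mathbb{Z}]$.

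The only genuine obstacle is book-keeping about pointed versus unpointed categories and the connected components of $B\mathcal{G}$. One has to check that the exponential bijection $[X\times N, BU]\cong [X,\textrm{Maps}(N,BU)]$, applied componentwise, matches the decomposition $B\mathcal{G} = \coprod_E \textrm{Maps}_E(N, BU)$ indexed by isomorphism classes of finite rank $E\to N$, and that this matches the indexing by the local model that is built into the definition of $\textrm{Vect}^\mathcal{G}_N(X)$. Everything else is a direct application of standard homotopy-theoretic adjunctions together with the previous lemma.
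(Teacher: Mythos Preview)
Your proposal is correct and follows essentially the same route as the paper's proof: invoke Lemma~\ref{Ktheqgau} to pass to $K(X\times N)$, represent $K$-theory by $BU\times\mathbb{Z}$, and use the exponential adjunction together with $B\mathcal{G}=\textrm{Maps}(N,BU)$. The only cosmetic difference is that the paper applies the exponential law to $BU\times\mathbb{Z}$ in one stroke (implicitly using $\textrm{Maps}(N,\mathbb{Z})\simeq\mathbb{Z}$ for connected $N$), whereas you split off the $\mathbb{Z}$ factor first and handle it separately; your version is in fact slightly more explicit about that step.
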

Here $\simeq$ is used to denote a bijection, but  it induces a tautological ring isomorphism.
\begin{proof}
Any homotopy $\phi:X\times[0,1]\to \textrm{Maps}(N, BU\times\mathbb{Z})$ can be seen as a map $\phi:X\times N\times[0,1]\to BU\times\mathbb{Z}$. Therefore $[X,\textrm{Maps}(N,BU)\times\mathbb{Z}]=[X\times N,BU\times\mathbb{Z}]$. Thus
\begin{align*}
K^\mathcal{G}_N(X)&\cong K(X\times N)\simeq[X\times N,BU\times \mathbb{Z}]\\
&\simeq[X,\textrm{Maps}(N,BU)\times \mathbb{Z}]\simeq[X,B\mathcal{G}\times \mathbb{Z}].
\end{align*}

\end{proof}

We can also build  $\mathcal{G}(N,E)$-principal bundles $R\to X$ using cocycles
$
\kappa_{\alpha\beta}:U_\alpha\cap U_\beta\longrightarrow \mathcal{G}(N,E),$
for $\{U_\alpha\}$ a covering of $X$.
We denote the set of $\mathcal{G}$-principal bundles by $\textrm{Prin}_N^{\mathcal{G}}(X)$. The canonical representation of $\mathcal{G}(N,E)$ on the space $\Gamma^s(N,E)$
gives an associated $\mathcal{G}$-vector bundle
\begin{align*}
\mathcal{E}=R\times_{\mathcal{G}(N,E)}\Gamma^s(N,E).
\end{align*}
$\mathcal{G}(N,E)$-principal bundles are used in section \S\ref{symbsubsec} to define the symbol map.

\subsection{\textbf{ $K^{\mathcal{G}}$-Theory and $\Omega$ Spectra}}

We  recall the definition of an $\Omega$ spectrum.
\begin{defin}
A sequence of based CW complexes and base point preserving maps $(E_k,\epsilon_k)_{k\in\mathbb{Z}}$ is an $\Omega$ spectrum if each  $\epsilon_k:E_k\to \Omega E_{K+1}$ is a homotopy equivalence.
\end{defin}

Given a CW complex $X$ and an $\Omega$ spectrum $(E_k,\epsilon_k)_{k\in\mathbb{Z}}$, the spaces $[X,E_k]$  form a generalized cohomology theory satisfying Milnor's additivity axiom. Any additive generalized cohomology theory has an $\Omega$ spectrum \cite[p. 35]{Ko}.

We wish to show that $B\mathcal{G}\times \mathbb{Z}$  is the first term in an $\Omega$ spectrum. First we prove
\begin{lemma}\label{Omegaspectmaps}
Given an $\Omega$ spectrum $(E_k,\epsilon_k)_{k\in\mathbb{Z}}$ and a based compact CW complex $X$, there exist based maps $\epsilon_k^*:\textrm{Maps}(X,E_k)\to\textrm{Maps}(X,E_{k+1})$ such that the sequence $(\textrm{Maps}(X,E_k),\epsilon^*_k)_{k\in\mathbb{Z}}$ is an $\Omega$ spectrum.
\end{lemma}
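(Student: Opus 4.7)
The natural plan is to define $\epsilon_k^*$ as post-composition with $\epsilon_k$, then invoke the exponential law for mapping spaces to identify the codomain with $\Omega\textrm{Maps}(X,E_{k+1})$, and finally show this post-composition map is a homotopy equivalence. Concretely, I would set $\epsilon_k^*(f) = \epsilon_k \circ f$ for $f:X\to E_k$ based, so that $\epsilon_k^*(f):X\to\Omega E_{k+1}$. The $\Omega$-spectrum condition will then be the statement that this map, viewed as landing in $\Omega\textrm{Maps}(X,E_{k+1})$ through the adjunction, is a homotopy equivalence.

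The first real step is the exponential identification
\begin{equation*}
\textrm{Maps}(X,\Omega E_{k+1}) \;=\; \textrm{Maps}\bigl(X,\textrm{Maps}(S^1,E_{k+1})\bigr) \;\cong\; \textrm{Maps}\bigl(S^1,\textrm{Maps}(X,E_{k+1})\bigr) \;=\; \Omega\textrm{Maps}(X,E_{k+1}),
\end{equation*}
which is a homeomorphism for based mapping spaces equipped with the compact-open topology, using compactness of both $X$ and $S^1$. (Working in the category of based CW complexes with non-degenerate basepoints keeps this within familiar ground, see e.g. the standard treatment in Hatcher or May.) Under this identification $\epsilon_k^*$ becomes the natural candidate for the structure map of the new spectrum.

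Next I need to verify that post-composition with a based homotopy equivalence $\epsilon_k:E_k\to\Omega E_{k+1}$ induces a homotopy equivalence
\begin{equation*}
(\epsilon_k)_*:\textrm{Maps}(X,E_k)\longrightarrow\textrm{Maps}(X,\Omega E_{k+1}).
\end{equation*}
This is a general fact: if $\delta_k$ is a based homotopy inverse for $\epsilon_k$ and $H_t,G_t$ are the based homotopies witnessing $\delta_k\circ\epsilon_k\simeq\textrm{id}$ and $\epsilon_k\circ\delta_k\simeq\textrm{id}$, then post-composing by $H_t$ and $G_t$ defines continuous homotopies showing $(\delta_k)_*$ is a two-sided homotopy inverse for $(\epsilon_k)_*$. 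Continuity of $f\mapsto H_t\circ f$ in the compact-open topology again uses compactness of $X$.

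Combining these two steps, $\epsilon_k^*$ is a homotopy equivalence from $\textrm{Maps}(X,E_k)$ to $\Omega\textrm{Maps}(X,E_{k+1})$, so $(\textrm{Maps}(X,E_k),\epsilon_k^*)_{k\in\mathbb{Z}}$ is an $\Omega$ spectrum. The ``hard part'' is only setting-theoretic: making sure the exponential law holds as a homeomorphism of based mapping spaces with the correct topology, which compactness of $X$ supplies. Everything else is formal manipulation once adjunction and functoriality of $\textrm{Maps}(X,-)$ on based homotopy are in hand.
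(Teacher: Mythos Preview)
Your proof is correct and follows essentially the same approach as the paper: define the structure maps by post-composition with $\epsilon_k$ and invoke the exponential law $\textrm{Maps}(X,\Omega E_{k+1})\cong\Omega\,\textrm{Maps}(X,E_{k+1})$. Your write-up is in fact cleaner than the paper's, which contains a typo in the composition order (it writes $f\circ\epsilon_k$ rather than $\epsilon_k\circ f$) and does not spell out, as you do, why post-composition with a homotopy equivalence is again a homotopy equivalence; the one point the paper mentions that you leave implicit is that $\textrm{Maps}(X,E_k)$ has the homotopy type of a CW complex (Milnor's theorem), which is needed to match the definition of an $\Omega$ spectrum.
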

\begin{proof}
The spaces $\textrm{Maps}(X,E_k)$ are homotopy equivalent to CW complexes. By hypothesis, there is a homotopy equivalence $E_2\sim\Omega E_1$, so
\begin{align*}
\textrm{Maps}(X,E_2)\sim\textrm{Maps}(X,\Omega E_1)=\textrm{Maps}_*(X\times S^1,E_1),
\end{align*}
where $\textrm{Maps}_*(X\times S^1,E_1)$ are  maps sending $X\times \{1\}$ to a fixed base point. This implies
\begin{align*}
\textrm{Maps}_*(X\times S^1,E_1)=\Omega(\textrm{Maps}(X,E_1)).
\end{align*}
We proceed inductively: given $\epsilon_k:E_k\to E_{k+1}$ and a map $f:X\to E_k$, we define $\epsilon^*_k(f)=f\circ \epsilon_k$.
\end{proof}
Recall that complex K-theory is the extraordinary cohomology theory associated to the two-periodic spectrum  $\Omega^K=(BU\times\mathbb{Z},U,BU\times\mathbb{Z},\ldots)$.
\begin{theo}\label{omegaspecimp}
Given a compact CW complex $X$, the group $K_N^\mathcal{G}(X)\cong[X,B\mathcal{G}\times \mathbb{Z}]$ is the first term of a two periodic extraordinary cohomology theory.
\end{theo}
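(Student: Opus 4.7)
The plan is to apply Lemma \ref{Omegaspectmaps} with $X$ replaced by $N$ and with the complex K-theory spectrum $\Omega^K=(BU\times\mathbb{Z},U,BU\times\mathbb{Z},U,\ldots)$. Since $N$ is a closed manifold (hence a compact CW complex, with any choice of basepoint), the lemma produces an $\Omega$ spectrum
\begin{align*}
\bigl(\textrm{Maps}(N,BU\times\mathbb{Z}),\,\textrm{Maps}(N,U),\,\textrm{Maps}(N,BU\times\mathbb{Z}),\,\ldots\bigr).
\end{align*}
Two-periodicity is inherited directly: the structure maps $\epsilon_k^*(f)=f\circ\epsilon_k$ defined in the lemma are literally induced by the structure maps of $\Omega^K$, so the period $2$ of $\Omega^K$ passes to the mapped spectrum.

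Next I would identify the zeroth space of this spectrum with $B\mathcal{G}\times\mathbb{Z}$. Since $N$ is connected, $\textrm{Maps}(N,\mathbb{Z})=\mathbb{Z}$, so
\begin{align*}
\textrm{Maps}(N,BU\times\mathbb{Z})\cong \textrm{Maps}(N,BU)\times\mathbb{Z}=B\mathcal{G}\times\mathbb{Z},
\end{align*}
using the identification $B\mathcal{G}=\textrm{Maps}(N,BU)$ established before the statement.

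Finally, I would verify that the cohomology theory this spectrum represents, evaluated at a compact based CW complex $X$, recovers $K^{\mathcal{G}}_N(X)$. By the exponential law and Lemma \ref{Ktheqgau},
\begin{align*}
[X,B\mathcal{G}\times\mathbb{Z}]&=[X,\textrm{Maps}(N,BU)\times\mathbb{Z}]\\
&\simeq[X\times N,BU\times\mathbb{Z}]\simeq K(X\times N)\cong K^{\mathcal{G}}_N(X),
\end{align*}
matching the isomorphism just proved. The shifted terms of the spectrum give the higher groups $[X,\textrm{Maps}(N,U)]\cong K^{1}(X\times N)$, etc., and two-periodicity of the spectrum yields the claimed two-periodicity of the resulting cohomology theory.

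The main point of care, and the only real obstacle, is the basepoint bookkeeping implicit in Lemma \ref{Omegaspectmaps}: one must fix basepoints of $N$ and of each $E_k$ compatibly so that the mapping spaces are well pointed CW complexes and the structure maps are based homotopy equivalences, and so that $\textrm{Maps}(N,\Omega E_{k+1})$ is genuinely $\Omega\,\textrm{Maps}(N,E_k)$ rather than only weakly so. Once these are set consistently, the argument is just the exponential law applied term by term.
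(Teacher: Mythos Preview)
Your proposal is correct and follows essentially the same route as the paper: apply Lemma~\ref{Omegaspectmaps} with $N$ in place of $X$ to the complex $K$-theory spectrum $\Omega^K$, obtain the $\Omega$ spectrum $(\textrm{Maps}(N,BU\times\mathbb{Z}),\textrm{Maps}(N,U),\ldots)$, inherit two-periodicity from $\Omega^K$, and identify the zeroth term with $B\mathcal{G}\times\mathbb{Z}$ so that the represented theory at $X$ is $K^\mathcal{G}_N(X)$. Your extra remarks on connectedness of $N$ and basepoint hygiene are welcome elaborations but do not change the argument.
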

\begin{proof}
By Theorem 4,
\begin{align*}
K_N^\mathcal{G}(X)\simeq[X,B\mathcal{G}\times \mathbb{Z}]
\cong[X,\textrm{Maps}(N,BU\times\mathbb{Z})].
\end{align*}
By Lemma \ref{Omegaspectmaps}, $\Omega_N^\mathcal{G}=(\textrm{Maps}(N,BU\times\mathbb{Z}),\textrm{Maps}(N,U),\textrm{Maps}(N,BU\times\mathbb{Z})\ldots )$ is an $\Omega$ spectrum. Since $\Omega^K$ is two periodic, so is $\Omega^\mathcal{G}_N$.
\end{proof}

For a single gauge group $\mathcal{G}(N,E)$, we can define another generalized cohomology theory, analogous to connective K-theory.
\begin{lemma}
There is an $\Omega$ spectrum $(B\mathcal{G}(N,E),E_1,E_2,\ldots,\epsilon_k)_{K\in\mathbb{N}}$. Therefore, $[X,B\mathcal{G}(N,E)]$ can be extended to a generalized cohomology theory.
\end{lemma}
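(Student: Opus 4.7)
The plan is to adapt the proof of Theorem \ref{omegaspecimp} by restricting to path components and taking universal covers at each level. Since $BU$ is a grouplike H-space (via Whitney sum), multiplication by a classifying map of $E$ gives a homeomorphism $\textrm{Maps}(N,BU)\to\textrm{Maps}(N,BU)$ carrying the basepoint component $\textrm{Maps}_0(N,BU)$ onto $\textrm{Maps}_E(N,BU)=B\mathcal{G}(N,E)$. Therefore $B\mathcal{G}(N,E)\simeq\textrm{Maps}_0(N,BU)$, and I will take the latter as $E_0$ in the spectrum to be constructed.

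Applying Lemma \ref{Omegaspectmaps} to the two-periodic $\Omega$-spectrum $\Omega^K=(BU\times\mathbb{Z},U,BU\times\mathbb{Z},U,\ldots)$ of complex K-theory gives the $\Omega$-spectrum $\Omega^\mathcal{G}_N=(\textrm{Maps}(N,BU\times\mathbb{Z}),\textrm{Maps}(N,U),\ldots)$ with based structure maps $\epsilon^*_k$. I would define $E_k$ for $k\geq 1$ inductively as the universal cover of the basepoint path component of the $k$-th space in $\Omega^\mathcal{G}_N$; for instance, $E_1$ is the universal cover of the basepoint component of $\textrm{Maps}(N,U)$. The key general facts are: (i) for a pointed connected CW complex $Y$, one has $\Omega \widetilde{Y}\simeq(\Omega Y)^0$, the basepoint component of $\Omega Y$; and (ii) the restriction of a homotopy equivalence to corresponding path components is again a homotopy equivalence. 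Combined with the observation that $\epsilon_0^*$ carries $E_0$ homotopy-equivalently onto the basepoint component $(\Omega\textrm{Maps}(N,U))^0=\Omega E_1$, this gives the homotopy equivalence $E_0\to\Omega E_1$; iterating produces $(E_k,\epsilon_k)_{k\in\mathbb{N}}$ as the desired $\Omega$-spectrum.

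The main obstacle is the inductive bookkeeping, i.e.\ checking that at each level the restriction of the structure map from Lemma \ref{Omegaspectmaps} to the universal cover of the basepoint component is well-defined and remains a homotopy equivalence onto $\Omega$ of the next term. This is routine using the covering-space correspondence together with the fact that $\textrm{Maps}(N,Y)$ has CW homotopy type whenever $Y$ does and $N$ is compact. Once $(E_k,\epsilon_k)$ is established as an $\Omega$-spectrum, the generalized cohomology theory $X\mapsto [X,E_k]$ follows automatically. Unlike the two-periodic theory of Theorem \ref{omegaspecimp}, this one is only connective, since restricting to a single component effectively kills the $\mathbb{Z}$-factor in $\Omega^K$ responsible for Bott periodicity --- analogously to how connective complex K-theory $bu$ differs from periodic $KU$.
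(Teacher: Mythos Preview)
Your construction has a genuine gap at the step $E_1\simeq\Omega E_2$. Writing $F_k$ for the $k$-th space of $\Omega^\mathcal{G}_N$ and $E_k=\widetilde{F_k^0}$ for $k\ge 1$, your fact (i) gives
\[
\Omega E_2=\Omega\widetilde{F_2^0}\simeq(\Omega F_2)^0\simeq F_1^0,
\]
whereas $E_1=\widetilde{F_1^0}$. These agree only if $\pi_1(F_1^0)=0$, i.e.\ $\pi_1\bigl(\textrm{Maps}(N,U)\bigr)=0$. But
\[
\pi_1\bigl(\textrm{Maps}(N,U)\bigr)\cong[N,\Omega U]=[N,BU\times\mathbb{Z}]=K^0(N),
\]
which is never zero. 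The point is that passing to the universal cover kills only $\pi_1$; iterating ``universal cover of basepoint component'' yields $\pi_1(E_k)=0$ while $\pi_1(\Omega E_{k+1})=\pi_2(F_{k+1})\cong\pi_1(F_k)$ remains nonzero. What the inductive step actually requires at level $k$ is the $k$-connected cover of $F_k$, not merely its $1$-connected cover, so the ``routine bookkeeping'' you allude to does not go through.

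The paper sidesteps this by a more direct route: instead of starting from the periodic spectrum and attempting to truncate, it feeds the connective $\Omega$-spectrum $(BU,SU,BSU,\ldots)$ of $ku$-type---whose terms already carry the correct increasing connectivity---into Lemma~\ref{Omegaspectmaps}. This produces the $\Omega$-spectrum $(\textrm{Maps}(N,BU),\textrm{Maps}(N,SU),\textrm{Maps}(N,BSU),\ldots)$ outright, and one then restricts to the component $\textrm{Maps}_E(N,BU)=B\mathcal{G}(N,E)$. Your observation that $B\mathcal{G}(N,E)\simeq\textrm{Maps}_0(N,BU)$ via the grouplike H-space structure of $BU$ is correct and would combine well with the paper's argument, but the subsequent tower must be built from higher connected covers---or, more economically, from the ready-made connective spectrum---rather than from universal covers.
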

\begin{proof}
$(BU,SU,BSU,\ldots)$ are the first three terms of an $\Omega$ spectrum $ku_*$, whose associated extraordinary cohomology theory is connective K-theory \cite{Mor}. For the corresponding spaces of maps we get
\begin{align*}
\textrm{Maps}(N,BU)\sim \Omega \textrm{Maps}(N, SU)\sim \Omega^2 \textrm{Maps}(N,BSU)\ldots
\end{align*}
These homotopy equivalences are  valid for the component $\textrm{Maps}_E(N,BU)$,
so we get an $\Omega$ spectrum with first term $B\mathcal{G}(N,E)$.
\end{proof}

\subsection{\textbf{Bott Periodicity and  Thom Isomorphism in $K^{\mathcal{G}}$}}

Define $K^{\mathcal{G}}(X)$ by
\begin{align*}
K^{\mathcal{G}}(X)=\bigotimes_{N}K^{\mathcal{G}}_N(X),
\end{align*}
where the tensor product of rings is taken over all possible diffeomorphism classes $\Xi$ of orientable manifolds $N$. An element $\otimes_{i\in \Xi}a_i\in K^\mathcal{G}(X)$ has almost all $a_i$ equal to the identity. Thus a general element of $K^\mathcal{G}$ is a finite sum of terms of the form $a_1\otimes\ldots\otimes a_k$ where $a_j\in K_{N_j}^{\mathcal{G}}(X)$ for a fibration $X\times N_j\to X$. Tensor products in $K^{\mathcal{G}}(X)$ are taken componentwise.

For a compact CW complex $X$, by Lemma \ref{Ktheqgau} and $K(X\times S^2)=K(X)\otimes K(S^2)$ \cite[p. 128]{EfP},  $$K^\mathcal{G}_N(X\times S^2)\cong K(X\times S^2\times N)=K(X\times N)\otimes K(S^2)
=K^\mathcal{G}_N(X)\otimes K(S^2).$$
This multiplicativity result is the analogue of Bott periodicity holding in this context.

Let $F\to X$ be a finite rank hermitian vector bundle. The projection $\pi_1:X\times N\to X$  induces $\pi_1^*F\to X\times N$. Let $B(F)$ and $S(F)$ be the associated ball and sphere bundle, respectively. The one point compactification of  $F$, denoted $F^+=F\cup\{\infty\}$, is homeomorphic to the quotient space $B(F)/S(F)$. It follows that $\pi^*(F)^+=F^+\times N$. Using the K-theory Thom isomorphism for  $\pi_1^*F$, we get
\begin{align*}
K^\mathcal{G}_N(X)\cong K(X\times N)\cong\tilde{K}(\pi_1^*(F)^+)\cong \tilde{K}(F^+\times N).
\end{align*}
As in ordinary K-theory, define
\begin{align*}
K^\mathcal{G}_N(F)&=\rm{ker}(K^\mathcal{G}_N(F^+)\to K(\infty))\\
K^\mathcal{G}(F)&=\bigotimes_N K^\mathcal{G}_N(F).
\end{align*}

It is immediate that
$K^\mathcal{G}_N(F)\cong \tilde{K}(F^+\times N)$, so we get a restatement of the Thom isomorphism in the $K^\mathcal{G}$ context:
\begin{theo}\label{Thom}
Let $F\to X$ be a finite rank vector bundle and $X$ a compact CW complex. Then
$K^\mathcal{G}_N(X)\cong K^\mathcal{G}_N(F),$ and so
$$K^\mathcal{G}(X)\cong K^\mathcal{G}(F).$$
\end{theo}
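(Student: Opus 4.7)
The plan is to observe that both sides of the claimed isomorphism have already been identified, in the paragraphs immediately preceding the theorem, with a single ordinary reduced K-theory group $\tilde{K}(F^+ \times N)$. The proof therefore reduces to stringing together those identifications and then extending to the unindexed statement by a componentwise tensor product.

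First I would invoke Lemma \ref{Ktheqgau} to rewrite $K^\mathcal{G}_N(X) \cong K(X \times N)$, and then apply the classical K-theory Thom isomorphism to the pulled-back vector bundle $\pi_1^* F \to X \times N$, where $\pi_1 : X \times N \to X$ is the projection. Since the total space of $\pi_1^* F$ is $F \times N$, its one-point compactification is the space written in the excerpt as $F^+ \times N$ (understood with $\infty \times N$ collapsed), and the Thom isomorphism yields $K(X \times N) \cong \tilde{K}(F^+ \times N)$. On the other side, the definition
\[
K^\mathcal{G}_N(F) = \ker\bigl(K^\mathcal{G}_N(F^+) \to K(\infty)\bigr)
\]
together with Lemma \ref{Ktheqgau} applied over the compact space $F^+$ gives $K^\mathcal{G}_N(F) \cong \tilde{K}(F^+ \times N)$ directly. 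Composing these two identifications produces the ring isomorphism $K^\mathcal{G}_N(X) \cong K^\mathcal{G}_N(F)$.

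For the second half of the theorem, I would tensor this family of isomorphisms over all diffeomorphism classes of closed orientable $N$:
\[
K^\mathcal{G}(X) = \bigotimes_N K^\mathcal{G}_N(X) \cong \bigotimes_N K^\mathcal{G}_N(F) = K^\mathcal{G}(F).
\]
Because both tensor products are defined componentwise, with almost all factors equal to the unit, and because the $N$-indexed isomorphism is natural and preserves the unit, this componentwise map is well-defined and is a ring isomorphism.

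The main point requiring care is the base-point bookkeeping in the reduced K-theory: one must verify that the copy of $\tilde{K}(F^+ \times N)$ produced by collapsing the section at infinity of $\pi_1^* F$ agrees with the one appearing in the definition of $K^\mathcal{G}_N(F)$ as the kernel of the augmentation $K^\mathcal{G}_N(F^+) \to K(\infty)$. Once that identification is made precise, the rest of the argument is a direct chase through Lemma \ref{Ktheqgau}, the ordinary Thom isomorphism, and the definitions, with no further analysis needed.
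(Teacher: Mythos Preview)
Your proposal is correct and follows essentially the same route as the paper: the argument preceding the theorem statement already establishes $K^\mathcal{G}_N(X)\cong K(X\times N)\cong \tilde K(F^+\times N)$ via Lemma~\ref{Ktheqgau} and the ordinary Thom isomorphism for $\pi_1^*F$, then identifies $K^\mathcal{G}_N(F)\cong \tilde K(F^+\times N)$ from the definition, and the unindexed statement follows by tensoring componentwise over all $N$. Your remark about the base-point bookkeeping (that $(\pi_1^*F)^+$ is $F^+\times N$ with $\{\infty\}\times N$ collapsed) is the one place where the paper's notation is slightly abusive, and you have handled it correctly.
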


\subsection{\textbf{The Serre-Swan Theorem in $K^\mathcal{G}$}}

For a closed manifold $Y$, let $C(Y)$ denote the ring of complex valued continuous functions. By the classical Serre-Swan theorem, the global sections functor gives an equivalence of categories between $\textrm{Vect}(Y)$ and the category of finitely generated projective modules over $C(Y)$. For $X$ a compact manifold, we want
an equivalence of $\textrm{Vect}^\mathcal{G}_N(X)$ with some category of modules over $C(X)$.

An element $\mathbb{E}\in\textrm{Vect}(X\times N)$ can be seen as a continuous family $\{\mathbb{E}_b\to N\}_{b\in X}$ of vector bundles over $N$, parameterized by $X$. We use the notation $\{\mathbb{E}_b\}\in V_X(N)$. $\Gamma(\{\mathbb{E}_b\})$ denotes the space of maps $h:X\to \{\Gamma(N,\mathbb{E}_b)\}_{b\in X}$ such that $h(b)\in \Gamma(N,\mathbb{E}_b)$ and the family $\{h(b)\}_{b\in X}$ is the restriction of  a continuous section of $\mathbb{E}\to X\times N$. Denote by $\Gamma(V_X(N))$ the category of all spaces of the form $\Gamma(\{\mathbb{E}_b\})$ with morphisms induced by maps between the corresponding vector bundles over $X\times N$.

The following theorem formalizes the idea that ${\rm{Vect}}^\mathcal{G}_N(X)$ should be equivalent to families of finitely generated projective $C(N)$-modules parameterized by $X$.

\begin{theo}\label{Serre Swan}
The global sections functor is an equivalence of categories between ${\rm{Vect}}^\mathcal{G}_N(X)$ and $\Gamma(V_X(N))$.
\end{theo}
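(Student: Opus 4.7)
My plan is to factor the global sections functor through the equivalence of categories $\textrm{Vect}^\mathcal{G}_N(X)\simeq \textrm{Vect}(X\times N)$ established earlier in \S2.3 (which sends $\pi_*(\mathbb{E})\leftrightarrow \mathbb{E}$ on objects and identifies morphisms with bundle maps $\mathbb{E}\to\mathbb{F}$), combined with the canonical identification (\ref{corrsect}), $\Gamma(X,\pi_*(\mathbb{E}))\cong \Gamma(X\times N,\mathbb{E})$. Under these identifications, the global sections functor becomes a functor $\mathcal{F}:\textrm{Vect}(X\times N)\to \Gamma(V_X(N))$ sending $\mathbb{E}\mapsto \Gamma(\{\mathbb{E}_b\})$ and sending a bundle map $\Xi':\mathbb{E}\to\mathbb{F}$ to the induced map on families of sections, so it suffices to show that $\mathcal{F}$ is an equivalence of categories.

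First I would verify essential surjectivity, which is immediate: every object of $\Gamma(V_X(N))$ is, by construction, of the form $\Gamma(\{\mathbb{E}_b\})$ for some $\mathbb{E}\to X\times N$, and thus equals $\mathcal{F}(\mathbb{E})$. Fullness is also essentially built into the definitions: morphisms in $\Gamma(V_X(N))$ are, by the very definition of the category, those induced by bundle maps $\mathbb{E}\to\mathbb{F}$, so every morphism lies in the image of $\mathcal{F}$.

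For faithfulness, I would appeal to the classical Serre-Swan theorem applied to the compact space $X\times N$: the functor $\Gamma(X\times N,-)$ from $\textrm{Vect}(X\times N)$ to finitely generated projective $C(X\times N)$-modules is faithful, so if two bundle maps $\Xi'_1,\Xi'_2:\mathbb{E}\to \mathbb{F}$ induce the same morphism $\Gamma(\{\mathbb{E}_b\})\to\Gamma(\{\mathbb{F}_b\})$, they must agree. Transporting this back through the equivalence $\textrm{Vect}^\mathcal{G}_N(X)\simeq \textrm{Vect}(X\times N)$ then gives faithfulness of the original global sections functor.

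The main subtlety I anticipate is verifying the precise bijection between a continuous section of $\mathbb{E}\to X\times N$ and a continuous family $\{h(b)\}_{b\in X}$ with $h(b)\in\Gamma(N,\mathbb{E}_b)$ inside the definition of $\Gamma(\{\mathbb{E}_b\})$; however, for the trivial fibration $\pi:X\times N\to X$, this is exactly the content of (\ref{corrsect}), and compactness of $X$ and $N$ ensures the needed uniformity, so no further analytic work is required.
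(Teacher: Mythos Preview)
Your proposal is correct and follows essentially the same route as the paper: both arguments rest on the identification $\Gamma(X,\pi_*(\mathbb{E}))\cong\Gamma(\{\mathbb{E}_b\})$ coming from (\ref{corrsect}), together with the fact that morphisms in $\Gamma(V_X(N))$ are by definition those induced by bundle maps over $X\times N$. The paper's proof is terser---it simply unwraps this identification at the level of objects and does not separately treat fullness and faithfulness---whereas you factor through the equivalence $\textrm{Vect}^\mathcal{G}_N(X)\simeq\textrm{Vect}(X\times N)$ and invoke the classical Serre--Swan theorem to secure faithfulness; this extra care is harmless and arguably makes the argument cleaner, but it is not a genuinely different strategy.
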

\begin{proof}
Take $\mathcal{E}\to X$ in ${\rm{Vect}}^\mathcal{G}_N(X)$. There is a bundle $\mathbb{E}\to X\times N$ with $\pi_*(\mathbb{E})=\mathcal{E}$. We need to show that $\pi_*(\mathbb{E})\longrightarrow \Gamma(\pi_*(\mathbb{E}))$ gives the desired equivalence.

Given $\phi\in \Gamma(X,\pi_*(\mathbb{E}))$ and $b\in X$, $\phi(b)\in \Gamma({\{b\}\times N,\mathbb{E}_b})$. Identifying $\{b\}\times N$ with $N$, we get
$\phi\in\Gamma(\{\mathbb{E}_b\}).$

Conversely, if $h\in \Gamma(\{\mathbb{E}_b\})$, the corresponding continuous family $\{\mathbb{E}_b\}_{b\in X}$ comes from a vector bundle $\mathbb{E}\to X\times N$ with
 $h\in \Gamma(X,\pi_*(\mathbb{E}))$.
\end{proof}

\section{\textbf{The Chern Character in $K^\mathcal{G}$}}
In this section, we show that the leading order Chern character gives a ring homomorphism
from $K^\mathcal{G}(X)$ to de Rham cohomology $H^*(X)$. We use a version of Chern-Weil theory, so from now on we assume that the base $X$ is a closed, orientable, finite dimensional manifold. In this section, $\mathcal{G} = \mathcal{G}(N,E)$ denotes smooth gauge transformations.

\subsection{\textbf{Definition and Basic Properties}}

There is a natural way to define Chern classes and a Chern character on $\mathcal{G}$-vector bundles $\mathcal{E}\to X$, with fibers $\Gamma^s(N,E)$. $X$ admits a partition of unity, so we can put a $\mathcal{G}$-connection on
$\mathcal{E}$ with curvature form $\Omega^\mathcal{E}\in \Lambda^2(X, \Gamma^s(\textrm
{End}(E)))$ taking values in sections of smooth endomorphisms. Note that $\Gamma^s(\textrm{End}(E))$ is formally the Lie algebra of the structure group $\mathcal{G}(N,E)$.

For a choice of Riemannian metric on $N$, there is a natural trace on this Lie algebra, obtained by taking the usual fiberwise trace and then integrating over $N$: for $H\in \Gamma(\textrm{End}(E))$,
\begin{align*}
\textrm{Tr}^\mathcal{G}(H)=\int_N\textrm{tr}\, H_y\,\,\textrm{dvol}(N).
\end{align*}

As in the standard  Chern-Weil construction, the leading order Chern classes $c_k^\mathcal{G}(\mathcal{E})$ and the leading order Chern character $ch^\mathcal{G}(\mathcal{E})$ are the de Rham cohomology classes \cite{Pay}
\begin{align*}
c_k^\mathcal{G}(\mathcal{E})&=[\textrm{Tr}^\mathcal{G}(\Omega^\mathcal{E})^k]
\in H^{2k}(X),\\
ch^\mathcal{G}(\mathcal{E})&=[\textrm{Tr}^\mathcal{G}(\exp(\Omega^\mathcal{E}))]
\in H^{\rm{ev}}(X).
\end{align*}

The term ``leading order'' will be explained in the next section.
\begin{lemma}
Let  $\mathcal{E}$ and $\mathcal{F}$ be $\mathcal{G}$-vector bundles with transition maps in $\mathcal{G}(N,E)$ and $\mathcal{G}(N,F)$, respectively. Then
\begin{align*}
ch^\mathcal{G}(\mathcal{E}\oplus \mathcal{F})&=ch^\mathcal{G}(\mathcal{E})+
ch^\mathcal{G}(\mathcal{F}),\\
ch^\mathcal{G}(\mathcal{E}\otimes \mathcal{F})&=ch^\mathcal{G}(\mathcal{E})\,\cup\,ch^\mathcal{G}(\mathcal{F}).
\end{align*}
Thus $ch^\mathcal{G}$ induces a ring homomorphism
$$ch^\mathcal{G}:K^\mathcal{G}_N(X)\to H^{\textrm{\rm{ev}}}(X,\mathbb{C}).$$
\end{lemma}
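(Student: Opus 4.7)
The plan is to adapt the classical Chern-Weil proof of multiplicativity of the Chern character to the $\mathcal{G}$-bundle setting, using at each stage the fiberwise identities $\textrm{tr}(A\oplus B) = \textrm{tr}(A) + \textrm{tr}(B)$ and $\textrm{tr}(A\otimes B) = \textrm{tr}(A)\,\textrm{tr}(B)$, together with the linearity of the integration over $N$ defining $\textrm{Tr}^\mathcal{G}$. Once both identities hold at the level of closed forms, they descend to $H^{\textrm{ev}}(X,\mathbb{C})$, and the universal property of the Grothendieck construction extends the resulting semiring homomorphism $ch^\mathcal{G}\colon \textrm{Vect}^\mathcal{G}_N(X)\to H^{\textrm{ev}}(X,\mathbb{C})$ to a ring map on $K^\mathcal{G}_N(X)$.

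For the direct sum, I would choose $\mathcal{G}$-connections $\nabla^\mathcal{E}$ and $\nabla^\mathcal{F}$ with curvatures $\Omega^\mathcal{E}$ and $\Omega^\mathcal{F}$, and equip $\mathcal{E}\oplus\mathcal{F} = \pi_*(\mathbb{E}\oplus\mathbb{F})$ with the block diagonal connection $\nabla^\mathcal{E}\oplus\nabla^\mathcal{F}$. Its curvature $\Omega^\mathcal{E}\oplus\Omega^\mathcal{F}$ and all of its powers respect the decomposition $\textrm{End}(E\oplus F) = \textrm{End}(E)\oplus\textrm{End}(F)\oplus(\textrm{off-diagonal})$, so $\exp(\Omega^{\mathcal{E}\oplus\mathcal{F}})$ remains block diagonal. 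Additivity of the fiberwise trace across blocks and of $\int_N$ then gives $\textrm{Tr}^\mathcal{G}(\exp(\Omega^{\mathcal{E}\oplus\mathcal{F}})) = \textrm{Tr}^\mathcal{G}(\exp(\Omega^\mathcal{E})) + \textrm{Tr}^\mathcal{G}(\exp(\Omega^\mathcal{F}))$ already at the form level.

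For the tensor product, equip $\pi_*(\mathbb{E}\otimes\mathbb{F})$ with the tensor product connection $\nabla^\mathcal{E}\otimes 1 + 1\otimes\nabla^\mathcal{F}$. Its curvature is $\Omega^\mathcal{E}\otimes 1 + 1\otimes\Omega^\mathcal{F}$, and the two summands commute since the endomorphism factors act on disjoint tensor slots and the form parts have even degree. Hence $\exp(\Omega^{\mathcal{E}\otimes\mathcal{F}}) = \exp(\Omega^\mathcal{E})\otimes\exp(\Omega^\mathcal{F})$, and the fiberwise identity $\textrm{tr}(A\otimes B) = \textrm{tr}(A)\,\textrm{tr}(B)$ reduces the claim to an identity of even forms on $X$.

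The main obstacle lies in the final step of the tensor product calculation: $\textrm{Tr}^\mathcal{G}$ ends with an integration over $N$, and in general $\int_N f\cdot g\,d\textrm{vol}$ is not $(\int_N f)\cdot(\int_N g)$, whereas the cup product $ch^\mathcal{G}(\mathcal{E})\cup ch^\mathcal{G}(\mathcal{F})$ separates the two integrals. To close this gap I would exploit the ring isomorphism $\pi_*\colon K(X\times N)\xrightarrow{\sim}K^\mathcal{G}_N(X)$ of Lemma \ref{Ktheqgau}: expressing $ch^\mathcal{G}\circ\pi_*$ in terms of the ordinary Chern character $ch\colon K(X\times N)\to H^*(X\times N)$ composed with integration along the fiber, multiplicativity of $ch^\mathcal{G}$ should follow from multiplicativity of $ch$ combined with a projection-formula argument, with any remaining volume factors absorbed into the normalization of $\textrm{Tr}^\mathcal{G}$. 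Alternatively, one can exploit the Chern-Weil freedom of choice of connection to take representatives on $\mathbb{E}$ and $\mathbb{F}$ pulled back from a local product structure, making $\textrm{tr}\exp(\Omega^\mathcal{E})$ and $\textrm{tr}\exp(\Omega^\mathcal{F})$ sufficiently decoupled in $y\in N$ for the factorization of integrals to hold at the cohomology level.
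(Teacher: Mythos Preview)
Your treatment of the direct sum is exactly what the paper does: block--diagonal connection, block--diagonal curvature, additivity of the fiberwise trace and of $\int_N$. Nothing to add there.

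For the tensor product, the paper's proof is a single sentence (``A similar argument works for tensor products''), while you have actually tried to carry the argument through and have put your finger on a genuine obstruction that the paper suppresses. After the step $\exp(\Omega^{\mathcal{E}\otimes\mathcal{F}})=\exp(\Omega^{\mathcal{E}})\otimes\exp(\Omega^{\mathcal{F}})$ and the fiberwise identity $\mathrm{tr}(A\otimes B)=\mathrm{tr}(A)\,\mathrm{tr}(B)$, one is left with
\[
\mathrm{Tr}^{\mathcal G}\bigl(\exp(\Omega^{\mathcal{E}\otimes\mathcal{F}})\bigr)
=\int_N \mathrm{tr}\bigl(\exp(\Omega^{\mathcal{E}})\bigr)_y\wedge
\mathrm{tr}\bigl(\exp(\Omega^{\mathcal{F}})\bigr)_y\,d\mathrm{vol}(y),
\]
and there is no reason for this to equal the product of the two separate integrals. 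Your instinct that this is not merely a form--level discrepancy is correct: already in degree zero one has $ch^{\mathcal G}_0(\mathcal E)=\mathrm{rk}(E)\,\mathrm{vol}(N)$, so
\[
ch^{\mathcal G}_0(\mathcal E\otimes\mathcal F)=\mathrm{rk}(E)\,\mathrm{rk}(F)\,\mathrm{vol}(N)
\quad\text{while}\quad
ch^{\mathcal G}_0(\mathcal E)\,ch^{\mathcal G}_0(\mathcal F)=\mathrm{rk}(E)\,\mathrm{rk}(F)\,\mathrm{vol}(N)^2,
\]
which disagree unless $\mathrm{vol}(N)=1$. (The paper itself writes $ch^{\mathcal G}(\mathcal F)=\mathrm{vol}(S^1)\,CH(T_{\mathbb C}X)$ later on, so the volume is not being silently normalized.)

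Neither of your proposed fixes closes this gap. The projection formula gives $\pi_*(\alpha\wedge\pi^*\beta)=\pi_*(\alpha)\wedge\beta$, which needs one factor pulled back from $X$; here both $CH(\mathbb E)$ and $CH(\mathbb F)$ live genuinely on $X\times N$, so it does not apply. And the ``choose a special connection'' idea cannot help: the leading order classes are independent of the connection, so if the identity fails for one choice it fails for all. The honest conclusion is that the multiplicativity statement, as written, requires the normalization $\mathrm{vol}(N)=1$ (equivalently, replacing $\mathrm{Tr}^{\mathcal G}$ by $\mathrm{vol}(N)^{-1}\mathrm{Tr}^{\mathcal G}$); with that convention your tensor--product argument goes through verbatim and matches what the paper intends by ``similar argument''.
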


The additivity of the Chern character implies the Whitney sum formula for the leading order Chern
classes.

\begin{proof}
Given connections $\nabla^\mathcal{E}$ and $\nabla^\mathcal{F}$ on $\mathcal{E}$ and $\mathcal{F}$, we can follow verbatim the usual arguments from finite dimensional Chern-Weil theory and obtain
\begin{align*}
\Omega^{\mathcal{E}\oplus\mathcal{F}}=
\begin{pmatrix}
\Omega^{\mathcal{E}}&0\\
0&\Omega^{\mathcal{F}}
\end{pmatrix}.
\end{align*}
Thus
\begin{align*}
\textrm{Tr}^\mathcal{G}(\exp(\Omega^{\mathcal{E}\oplus\mathcal{F}}))=
\,\,\textrm{Tr}^\mathcal{G}(\exp(\Omega^{\mathcal{E}}))+
\,\,\textrm{Tr}^\mathcal{G}(\exp(\Omega^{\mathcal{F}})),
\end{align*}
from which the additivity of $ch^\mathcal{G}$ follows. A similar argument works for tensor products.
\end{proof}

The usual Chern character in $K(X\times N)$ gives a complex ring isomorphism
\begin{align*}
CH:K^\mathcal{G}_N(X)\otimes\mathbb{C}\to H^{\textrm{ev}}(X\times N,\mathbb{C}).
\end{align*}
\begin{lemma}
 $CH$ and $ch^\mathcal{G}$ are related by
\begin{align*}
ch^\mathcal{G}(\mathcal{E})=\int_NCH(\mathbb{E})\,{\rm{dvol}}(N),
\end{align*}
where $\pi_*(\mathbb{E})=\mathcal{E}$.
\end{lemma}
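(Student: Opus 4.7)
The plan is to pick a compatible pair of connections on the two bundles and then compare their Chern--Weil representatives pointwise on $X$. Starting from a connection $\widetilde{\nabla}$ on the finite rank bundle $\mathbb{E}\to X\times N$, I would first use the product structure of the base to induce a $\mathcal{G}$-connection $\nabla^\mathcal{E}$ on $\mathcal{E}=\pi_*(\mathbb{E})$ via the identification (\ref{corrsect}): a section $s$ of $\mathcal{E}$ is the same data as a section of $\mathbb{E}$ over $X\times N$, and for $v\in T_xX$ one sets
\begin{equation*}
(\nabla^\mathcal{E}_v s)(y)=(\widetilde{\nabla}_{(v,0)}s)(x,y),
\end{equation*}
where $(v,0)\in T_xX\oplus T_yN$. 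A routine check shows that the resulting connection one-form takes values in $\Gamma^s(\textrm{End}(E))$, so $\nabla^\mathcal{E}$ is a genuine $\mathcal{G}$-connection, with which we compute the leading order Chern character.

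The key computation is to compare the two curvatures. Because the lifted vector fields satisfy $[(v_1,0),(v_2,0)]=([v_1,v_2],0)$ on $X\times N$, a direct calculation from the definition of curvature yields
\begin{equation*}
\Omega^\mathcal{E}(v_1,v_2)(x)(y)=\Omega^{\mathbb{E}}((v_1,0),(v_2,0))(x,y).
\end{equation*}
Under the bigrading $\Omega^*(X\times N)=\bigoplus_{p,q}\Omega^p(X)\,\widehat{\otimes}\,\Omega^q(N)$, this identifies $\Omega^\mathcal{E}$ with the pure $(2,0)$-component of $\Omega^{\mathbb{E}}$. Every other piece of $\Omega^{\mathbb{E}}$ carries positive $N$-degree, so in any wedge power $(\Omega^{\mathbb{E}})^k$ the only summand of $N$-degree zero is $(\Omega^\mathcal{E})^k$; since the fiberwise trace commutes with the bigrading, the pure $X$-form part of $\textrm{tr}(\exp(\Omega^{\mathbb{E}}))$ equals $\textrm{tr}(\exp(\Omega^\mathcal{E}))$.

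I would conclude by integrating this identity against $\textrm{dvol}(N)$. At each $x\in X$ the $X$-form $\textrm{tr}(\exp(\Omega^\mathcal{E}))(x)$ has scalar coefficients that depend on $y\in N$; integrating them over $N$ yields by definition $\textrm{Tr}^\mathcal{G}(\exp(\Omega^\mathcal{E}))(x)$, the Chern--Weil representative of $ch^\mathcal{G}(\mathcal{E})$. Interpreting $\int_N CH(\mathbb{E})\,\textrm{dvol}(N)$ as first projecting the representative $\textrm{tr}(\exp(\Omega^{\mathbb{E}}))$ onto its pure $X$-form component and then integrating the resulting $y$-dependent coefficient against $\textrm{dvol}(N)$, one obtains the very same form on $X$. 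Passing to de Rham cohomology classes proves the lemma. The main difficulty is purely bookkeeping: one must track carefully how the bidegree decomposition interacts with wedge products and the endomorphism trace, and verify that the prescribed operation on $CH(\mathbb{E})$ descends unambiguously to cohomology --- which it does, because the pure $X$-part of a $d$-closed form on $X\times N$ is $d_X$-closed for each fixed $y\in N$, and exactness is preserved similarly.
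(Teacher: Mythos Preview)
Your proof is correct and follows essentially the same approach as the paper's: both start from a connection on $\mathbb{E}$, push it down to $\mathcal{E}$ via $\nabla^\mathcal{E}_v s=\widetilde{\nabla}_{(v,0)}s$, identify $\Omega^\mathcal{E}$ with the evaluation of $\Omega^{\mathbb{E}}$ on horizontal pairs, and then observe that only the pure $X$-degree part of $\textrm{tr}(\exp(\Omega^{\mathbb{E}}))$ survives the operation $\int_N(\,\cdot\,)\,\textrm{dvol}(N)$. The only difference is packaging: the paper writes everything in local coordinates $dx^i,\,dy^j$ and tracks the $a'_{IJ}\,dx^I\wedge dx^J$ terms explicitly, whereas you phrase the same computation via the bigrading $\Omega^p(X)\,\widehat{\otimes}\,\Omega^q(N)$ and the observation that the $(2k,0)$-part of $(\Omega^{\mathbb{E}})^k$ is exactly $(\Omega^\mathcal{E})^k$.
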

\begin{proof}
Let $\xi$, $\eta$ be  vector fields on $X$. Their horizontal lifting to $X\times N$ with respect to the canonical splitting $T(X\times N)=TX\oplus TN$ are $\xi^H=(\xi,0)$ and $\eta^H=(\eta,0)$.

A connection $\nabla^{\mathbb{E}}$ on $\mathbb{E}$  induces a connection $\nabla^\mathcal{E}$ on $\pi_*(\mathbb E)=\mathcal{E}$ by \cite[p. 282]{BGV}
\begin{align*}
\nabla^\mathcal{E}_\xi(s)=\nabla^{\mathbb{E}}_{(\xi,0)}\tilde{s},
\end{align*}
where $s\leftrightarrow \tilde{s}$ is the correspondence  $\Gamma(X,\pi_*(\mathbb{E}))=\Gamma(X\times N,\mathbb{E})$. The curvature operators satisfy
\begin{align*}
\Omega^{\mathcal{E}}(\xi,\eta)=
\Omega^{\mathbb{E}}((\xi,0),(\eta,0)).
\end{align*}
Let $x$ denote local coordinates on $X$ and $y$ local coordinates on $N$. Locally,
\begin{align*}
\textrm{tr}(\Omega^{\mathbb{E}})=\sum_{i,j}a_{ij}(x,y)dx^i\wedge dx^j+
\sum_{k,l}b_{kl}(x,y)dx^k\wedge dy^l+
\sum_{p,q}c_{pq}(x,y)dy^p\wedge dy^q,
\end{align*}
for $a_{ij},b_{kl},c_{pq}\in C^\infty(X\times N)$. The exponential will be of the form
\begin{align*}
\rm{tr}(\exp(\Omega^{\mathbb{E}}))&=\sum_{I,J}a'_{IJ}(x,y)dx^I\wedge dx^J\\
&\qquad +\sum_{K,L}b'_{KL}(x,y)dx^K\wedge dy^L+
\sum_{P,Q}c'_{PQ}(x,y)dy^P\wedge dy^Q,
\end{align*}
for multiindices $I,J,K,L,P,Q$. Since $dy^j(\xi,0)=0$ for all $j$,
\begin{align*}
ch^\mathcal{G}(\mathcal{E})&=\int_N\textrm{tr}(\exp(\Omega^{\mathcal{E}}))\textrm{dvol}(N)\\
&=\bigg(\sum_{I,J}\int_Na'_{IJ}(x,y)\textrm{dvol}(N)\bigg)dx^I\wedge dx^J
\end{align*}
and
\begin{align*}
\int_NCH(\mathbb{E})\textrm{dvol}(N)&=\int_N \textrm{tr}(\exp(\Omega^{\mathbb{E}}))\textrm{dvol}(N)\\
&=\bigg(\int_N\sum_{I,J}a'_{I,J}(x,y)\,\textrm{dvol}(N)\bigg)dx^I\wedge dx^J
\end{align*}
\end{proof}

We now treat naturality of the leading order Chern class.
Let $\mathcal{E}=\pi_*(\mathbb{E})\in \textrm{Vect}_N^\mathcal{G}(X)$ and  $f:Y\to X$ a map with $Y$ another closed orientable manifold. The pullback bundle $f^*\mathcal{E}\to Y$ is defined as  $$f^*\mathcal{E}:=\pi_*\big((f\times \textrm{Id})^*(\mathbb{E})\big),$$ where $f\times \textrm{Id}:Y\times N\to X\times N $ is the induced map.
\begin{lemma}

With the same notation as before,
$$ch^\mathcal{G}(f^*\mathcal{E})=f^*ch^\mathcal{G}(\mathcal{E}).$$
\end{lemma}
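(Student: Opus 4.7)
The plan is to reduce the statement to naturality of the ordinary Chern character by applying the integration formula established in the previous lemma. Since $f^*\mathcal{E}=\pi_*((f\times\textrm{Id})^*\mathbb{E})$ by definition, that lemma gives
$$ch^\mathcal{G}(f^*\mathcal{E})=\int_N CH\bigl((f\times\textrm{Id})^*\mathbb{E}\bigr)\,\textrm{dvol}(N).$$
Naturality of the ordinary Chern character on finite rank bundles over $Y\times N$ and $X\times N$ then rewrites the integrand as $(f\times\textrm{Id})^*CH(\mathbb{E})$.

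The remaining content is the identity
$$\int_N(f\times\textrm{Id})^*\omega\,\textrm{dvol}(N)=f^*\int_N\omega\,\textrm{dvol}(N),$$
valid for any differential form $\omega$ on $X\times N$. This is the standard compatibility of fiber integration with base change along the trivial fibration $X\times N\to X$, and it is the only nontrivial step. I would verify it using the same local coordinate decomposition used in the previous lemma: any $\omega$ is a sum of terms $a(x,y)\,dx^I\wedge dy^J$, only those with $dy^J$ of top degree on $N$ survive $\int_N$, and for such a term both sides equal $\bigl(\int_N a(f(y'),y)\,\textrm{dvol}(N)\bigr)f^*dx^I$, where $y'$ denotes coordinates on $Y$.

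Combining these three facts yields
$$ch^\mathcal{G}(f^*\mathcal{E})=\int_N(f\times\textrm{Id})^*CH(\mathbb{E})\,\textrm{dvol}(N)=f^*\int_N CH(\mathbb{E})\,\textrm{dvol}(N)=f^*ch^\mathcal{G}(\mathcal{E}),$$
where the last equality is again the previous lemma applied to $\mathcal{E}$ itself. I do not expect a genuine obstacle: the map $f\times\textrm{Id}$ acts as the identity on the fiber coordinates over which one integrates, so the projection formula is essentially automatic, and all deeper content has been absorbed into the integration formula that is already available.
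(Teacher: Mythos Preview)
Your proof is correct and follows exactly the same route as the paper: apply the integration formula of the previous lemma to $f^*\mathcal{E}$, invoke naturality of the ordinary Chern character, commute $(f\times\mathrm{Id})^*$ past the integral over $N$, and apply the integration formula once more. The paper simply asserts that $(f\times\mathrm{Id})^*$ commutes with integration over the fiber, whereas you sketch a local-coordinate verification; note, though, that in the paper's convention $\int_N(\cdot)\,\mathrm{dvol}(N)$ means wedging with $\mathrm{dvol}(N)$ and then integrating, so it is the terms with \emph{no} $dy$ factors (not top-degree $dy^J$) that survive---this does not affect your argument.
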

\begin{proof}
By the previous lemma,
$$ch^\mathcal{G}(f^*\mathcal{E})=\int_N CH\big((f\times \textrm{Id})^*(\mathbb{E})
\big)\textrm{dvol}(N)=\int_N (f\times \textrm{Id})^*CH(\mathbb{E})\textrm{dvol}(N).
$$
Since $(f\times \textrm{Id})^*$ commutes with the integral over the fiber $N$,
$$ch^\mathcal{G}(f^*\mathcal{E})=(f\times \textrm{Id})^*\int_N CH(\mathbb{E})\textrm{dvol}(N)=f^*(ch^\mathcal{G}(\mathcal{E})).$$
\end{proof}

The leading order Chern character extends to $K^\mathcal{G}(X)$ by
setting  %Given a finite list $(a_1\otimes\ldots\otimes a_k)\in K^\mathcal{G}(X)$,
$$ch^\mathcal{G}(a_1\otimes\ldots\otimes a_k)=ch^\mathcal{G}(a_1)
\cup\ldots\cup ch^\mathcal{G}(a_k),$$
and then extending  linearly. With this extension,
$$ch^\mathcal{G}:K^\mathcal{G}(X)\to H^{\textrm{ev}}(X)$$
is again a ring homomorphism.

We now show that this Chern character is compatible with the module structures on K-theory and cohomology.
$K^\mathcal{G}_N(X)$ is a $K^\mathcal{G}_N(\textrm{pt})$-module as follows.
Since $K^\mathcal{G}_N(\textrm{pt})\cong K(N)$ and $K^\mathcal{G}_N(X)\cong K(X\times N)$, we get a product
\begin{align*}
\alpha: K^\mathcal{G}_N(\textrm{pt})\otimes K^\mathcal{G}_N(X)\to
K^\mathcal{G}_N(X),
\end{align*}
by taking $(H\to N)\in K^\mathcal{G}_N(\rm{pt})$ and $\mathbb{E}\to X\times N\in K^\mathcal{G}_N(X)$ and defining
\begin{align*}
\alpha(H,\mathbb{E}):=\pi_2^*(H)\otimes \mathbb{E}\in K(X\times N)\cong K^\mathcal{G}_N(X).
\end{align*}
It is easy to see that the  diagram
\begin{align}\label{impdiag}
\begin{diagram}
\node{K^\mathcal{G}_N(\textrm{pt})\otimes K^\mathcal{G}_N(X)}\arrow{s,l}{CH\otimes\, CH}\arrow{e,t}{\alpha}\node{K^\mathcal{G}_N(X)}\arrow
{s,l}{CH}\\
\node{H^{\textrm{ev}}(N)\otimes H^{\textrm{ev}}(X\times N)}\arrow{e,t}{\cup}\node{H^{\textrm{ev}}(X\times N)}
\end{diagram}
\end{align}
commutes, {\it i.e.}  the $K^\mathcal{G}_N(\rm{pt})$-module structure of $K^\mathcal{G}_N(X)$ and the $H^{\rm{ev}}(N)$-module structure of $H^{\rm{ev}}(X\times N)$ are compatible. Integrating over $N$ gives another commutative diagram, this time for the leading Chern character:
\begin{align}
\begin{diagram}\label{impdiag2}
\node{K^\mathcal{G}_N(\textrm{pt})\otimes K^\mathcal{G}_N(X)}\arrow{s,l}{ch^\mathcal{G}\otimes\, ch^\mathcal{G}}\arrow{e,t}{\alpha}\node{K^\mathcal{G}_N(X)}\arrow
{s,l}{ch^\mathcal{G}}\\
\node{\mathbb{C}\otimes H^{*}(X)}\arrow{e,b}{}\node{H^{*}(X)}
\end{diagram}
\end{align}

There are diagrams corresponding to  (\ref{impdiag}) and (\ref{impdiag2}) for $K^\mathcal{G} = \bigotimes K^\mathcal{G}_N$, given by replacing
the cohomology groups with
$$\bigotimes_{N}H^{\textrm{ev}}(N)\otimes H^{\textrm{ev}}(X\times N)\ {\rm and}
\  \bigotimes_{N}H^{\textrm{ev}}(X\times N).$$

\subsection{\textbf{Examples of Nontrivial $K^\mathcal{G}_{S^1}$-classes}}\label{exloop}

The tangent bundle to the free  loop space of a manifold is a natural example of a $\mathcal{G}$-vector bundle.
Let $X^n$ be a smooth, oriented, closed manifold and $LX$ its loop space.
Here $LX$ is the completion of the space of smooth loops with respect to the Sobolev topology  for large parameter $s$, as
explained in \cite{RMT}. The complexified
tangent bundle $T_\mathbb{C}LX\to LX$ is canonically a $\mathcal{G}(S^1,\underline{\mathbb{C}}^n)$-vector bundle modeled on a trivial bundle $\underline{\mathbb{C}}^n
 = S^1\times \mathbb{C}^n\to S^1$.

There is a canonical inclusion $\iota: X\to LX$ by considering a point as a constant loop. The pullback bundle $\mathcal{F}=\iota^*(T_\mathbb{C}LX)\to X$ is a $\mathcal{G}$-vector bundle. The fiber over a point $x_0\in X$ is given by $\mathcal{F}|_{x_0}=L(T_{x_0}X\otimes\mathbb{C})$.
The transition maps for $\mathcal{F}$ are the the transition maps of $T_\mathbb{C}X$, but now acting on $L\mathbb{C}^n$ instead of $\mathbb{C}^n$. Thus at  $x_0\in X$ in the overlap of two charts, the transition map  in $GL(n,\mathbb{C})$ for $T_{x_0}X\otimes \mathbb C$ is also  a constant function in
\begin{align*}
\textrm{Maps}(S^1, GL(n,\mathbb{C}))=\mathcal{G}(S^1,\underline{\mathbb{C}}^n)
\end{align*}
for $\mathcal F.$
Since the transition maps are constant, we can construct a connection $\nabla^{\mathcal{F}}$ on $\mathcal{F}$ taking values in $\textrm{End}(\mathbb{C}^n)$ (identified with constant maps in
$\textrm{Maps}(S^1, \rm{End}(\mathbb{C}^n))$.
By Lemma 12, The corresponding leading order Chern character is
\begin{align*}
ch^\mathcal{G}(\mathcal{F})=\textrm{vol} (S^1)\,CH(T_\mathbb{C}X).
\end{align*}
Therefore, manifolds with $CH(T_\mathbb{C}X)\neq 0$ give examples of nontrivial
elements $\mathcal{F}\to X$ in $K^\mathcal{G}_{S^1}(X).$

\section{\textbf{K-Theory for Pseudodifferential Bundles }}
As mentioned in the introduction, pseudodifferential bundles arise in the study of the geometry of mapping spaces \cite{RMT}. In fact, the leading order Chern character was originally defined for pseudodifferential bundles in \cite{Pay}.  In this section we construct a K-theory $K^\Psi$ for these bundles with the leading order Chern character again a ring homomorphism
from $K^\Psi$ to de Rham cohomology. The construction of $K^\Psi$ is also
motivated by the discovery of
nontrivial examples of pseudodifferential bundles in \cite{ALH1}.

\subsection{Preliminaries on Pseudodifferential Operators}
We recall some results of H\"{o}rmander \cite{Hor} about the norm closure
of pseudodifferential operators of  order zero. Let $E$ be a finite rank
Hermitian vector bundle over a closed Riemannian manifold $N$. Denote by
$\Psi_0=\Psi_0(N,E)$ the algebra of zero order
pseudodifferential operators acting on
$\Gamma^s(N,E)$. The group of invertible elements of $\Psi_0$
is denoted by $\Psi_0^*$. The leading symbols
of operators in $\Psi_0$ are smooth sections of $\textrm{End}(S^*N,\pi^*E)$,
where $\pi:S^*N\to N$ is the cosphere bundle.

Operators in $\Psi_0$ are bounded on $\Gamma^s(N,E)$.
For $\overline{\Psi}_0$ the norm closure of $\Psi_0$ in $GL(\Gamma^s(N,E))$,
 the leading symbol extends
to a continuous map\\
 $\overline{\Psi}_0\to\textrm{End}(S^*N,\pi^*E)$,  where $\textrm{End}( S^*N,\pi^*E)$ now denotes continuous endomorphisms with the uniform topology.
For $P\in \Psi_0(N,E)$ and a bundle $G\to Y$, the operator
$P\otimes 1$ acting on smooth sections of $E\boxtimes G \to N\times Y$
can be extended to a bounded operator in $\overline{\Psi}_0(E\boxtimes G \to N\times Y)$.
The leading symbol of the extension is $\sigma_0(P)\otimes 1$, where $\sigma_0(P)$
is the principal symbol of $P$ \cite[p. 202]{Hor}.
Note that the extension lies only in the closure
$\overline{\Psi}_0(E\boxtimes G \to N\times Y)$.

Assume that  $G\to Y$ is $ \mathbb{C}\to{\rm{pt}}$,
considered as a complex vector bundle over a point. We get an operator $P\otimes 1$
acting on $E\otimes 1\to N\times \{\rm{pt}\}$, which can be canonically identified
with $E\to N$. Given a pseudodifferential operator $Q\in\Psi_0(N,F)$,
we get an operator $P\otimes Q\in \overline{\Psi}_0(N,E\otimes F)$ defined by
\begin{align}\label{tensorpsido}
P\otimes Q=(P\otimes 1)\circ(1\otimes Q).
\end{align}
Here we are using
$E\otimes F \cong (E\otimes 1)\otimes(1\otimes F)$.

This allows us to define tensor products of pseudodifferential operators.
Again, this product lies only in the norm closure and is not \textit{per
se} a pseudodifferential operator.

\subsection{\textbf{Pseudodifferential Bundles}}
As before, we can construct a vector bundle over $X$ using pseudodifferential operators. Take  an open covering $\{U_\alpha\}$ of $X$ and glue the spaces  $U_\alpha\times \Gamma^s(N,E)$  using transition maps
\begin{align*}
\phi_{\alpha\beta}^\mathfrak{E}:U_\alpha\cap U_\beta\longrightarrow
\overline{\Psi}_0^*(N,E).
\end{align*}
(Here $\overline{\Psi}_0^* = (\overline {\Psi_0})^*$, not $\overline{\Psi_0^*}.$)
Denote the resulting bundle by $\mathfrak{E}\to X$ \cite{Pay}.
%Notice that these transition maps take values in invertible elements
%in the norm closure of $\Psi_0$.
For $\mathfrak{E}\to X$ as above,
$E\to N$ is called its \textit{model bundle}, and  $\mathfrak{E}$
is called a $\Psi$-bundle over $X$. A homomorphism between $\Psi$-bundles is a continuous family of fiber preserving linear maps given fiberwise
by elements of the norm closure of  pseudodifferential operators
of order zero. Here, continuity is taken with respect to the norm topology on the space of pseudodifferential operators.

For fixed $N$, we
define $\textrm{Vect}_N^{\Psi}(X)$ to be the set of  isomorphism classes
of $\Psi$-bundles over $X$ for all model bundles $E\to N$.
The full structure group of these bundles is
\begin{align*}
\overline{\Psi}_N^*=\prod_{E\in \textrm{Vect}(N)}\overline{\Psi}_0^*(N,E).
\end{align*}
The closed manifold  $X$   admits partitions of unity, so  elements of $\textrm{Vect}_N^{\Psi}(X)$ admit fiber metrics and connections, and $\Psi$-bundles over contractible relatively
 compact open sets are trivial.

There is a straightforward procedure to make  $\textrm{Vect}_N^{\Psi}(X)$ an abelian
semiring.

For sums, take $\mathfrak{E}\to X$ as above and another $\Psi$-bundle
$\mathfrak{F}\to X$ with fibers $\Gamma^s(N,F)$ and transition maps
 $\phi_{\alpha\beta}^\mathfrak{F}:U_\alpha\cap U_\beta\longrightarrow
 \overline{\Psi}_0^*(N,F)$, with both bundles  trivial over the open sets
 $\{U_\alpha\}$ of a covering of $X$.
The transition maps for  $\mathfrak{E}\oplus \mathfrak{F}\longrightarrow X$
are
\begin{align*}
\phi^{\mathfrak{E}\oplus\mathfrak{F}}_{\alpha\beta}=
\begin{pmatrix}
\phi_{\alpha\beta}^\mathfrak{E}&0\\
0& \phi_{\alpha\beta}^\mathfrak{F}
\end{pmatrix}
\end{align*}
acting on $\Gamma^s(N,E\oplus F)$.

For  $\mathfrak{E}\otimes \mathfrak{F}\longrightarrow X$,
 take
\begin{align*}
\phi_{\alpha\beta}^\mathfrak{E}\otimes \phi_{\alpha\beta}^\mathfrak{F}=
(\phi_{\alpha\beta}^\mathfrak{E}\otimes 1)\circ(1\otimes
\phi_{\alpha\beta}^\mathfrak{F}),
\end{align*}
as in (\ref{tensorpsido}), acting on  $\Gamma^s(N,E\otimes F)$.
It is immediate that the cocycle conditions of the transition
maps are preserved, since the operators $\phi_{\alpha\beta}^
\mathfrak{E}\otimes 1$ and $1\otimes \phi_{\alpha\beta}^\mathfrak{F}$
commute.

From the abelian semiring $\textrm{Vect}_N^{\Psi}(X)$ we can
define the corresponding $K^\Psi$-theory ring $K^\Psi_N(X)$ as usual.
In particular, for $N=\{\rm{pt}\}$,
$\rm{Vect}_{\{\rm{pt}\}}^\Psi(X)=\textrm{Vect}(X)$ so $K^\Psi_{\textrm{pt}}(X)=K(X)$.

Similarly, we can define principal bundles
with fibers isomorphic to $\overline{\Psi}_0^*(N,E)$.
Once again, we glue copies of the group using the
transition maps. These principal bundles
are called $\Psi$-principal bundles and denoted
by $\textrm{Prin}_N^{\Psi}(X)$.

Let $P\to X$ be a $\Psi$-principal bundle. Using the
 canonical faithful representation of $\overline{\Psi}_0^*(N,E)$
 on $\Gamma^s(N,E)$ we get an associated $\Psi$-vector bundle
\begin{align*}
\mathfrak{E}=P\times_{\overline{\Psi}_0^*(N,E)} \Gamma^s(N,E).
\end{align*}
 In this paper, we only consider the action of  $\overline{\Psi}_0^*(N,E)$ on $\Gamma^s(N,E)$, so there is a one-to-one correspondence between $\Psi$-vector bundles and $\Psi$-principal bundles.

As with $K^\mathcal{G}(X)$, we define $K^{\Psi}(X)$ by
\begin{align*}
K^{\Psi}(X)=\bigotimes_{N}K^{\Psi}_N(X),
\end{align*}
where the tensor product of rings is taken over all diffeomorphism classes $\Xi$ of closed orientable manifolds $N$.

\subsection{\textbf{The Symbol Map in $K^\Psi$}}\label{symbsubsec}

The leading
symbol $\sigma_0(P)$  of $P\in \overline{\Psi}_0^*(N,E)$
is an element in $\mathcal{G}(S^*N,\pi^*{E})$,
the group of continuous gauge transformations. Set
\begin{align*}
\mathcal{G}_\sharp=\prod_{E\in \textrm{Vect}(N)}\mathcal{G}(S^*N,\pi^*E).
\end{align*}
We also have a well defined
 \textit{symbol map}
\begin{align*}
\sigma_0:\, \textrm{Prin}^\Psi_N(X)\to \textrm{Prin}^{\mathcal{G}_\sharp}_{S^*N}(X)
\end{align*}
in the obvious notation as follows:
For $\{\phi^P_{\alpha\beta}:U_{\alpha\beta}\to \overline{\Psi}_0^*(N,E)\}$ the transition maps of $\mathcal{P}
\in \textrm{Prin}^\Psi_N(X)$, define $\sigma_0(\mathcal P)\in \textrm{Prin}^{\mathcal{G}_\sharp}_{S^*N}(X)$  by the transition maps
$$\sigma_0(\phi^\mathcal{P}_{\alpha\beta}):U_{\alpha\beta}\to \mathcal{G}(S^*N,\pi^*{E}).$$
$\sigma_0$ is continuous precisely because we are using the uniform topology on $\mathcal{G}_\sharp$.

The map $\sigma_0$ can be defined similarly
on vector bundles. Since $\Gamma^s(N,E)$ and $\Gamma^s(S^*N,\pi^*E)$ are faithful
 representations of $\Psi^*_0$ and $\mathcal{G}_\sharp$, respectively, there is no loss of information by considering
\begin{align*}
\sigma_0:\, \textrm{Vect}^\Psi_N(X)\to \textrm{Vect}^{\mathcal{G}_\sharp}_{S^*N}(X).
\end{align*}
Note that if the fibers of $\mathcal{E}\in {\rm Vect}^\Psi_N(X)$ are
isomorphic to $\Gamma^s(N,E)$, then
$\sigma_0(\mathcal{E})$ has fibers isomorphic to $\Gamma^s(S^*N,\pi^*E)$.

From the definition of sums and tensor products of $\Psi$-vector bundles, $\sigma_0$ induces a ring homomorphism
\begin{align*}
\sigma_0:K^\Psi_N(X)\to K^{\mathcal{G}_\sharp}_{S^*N}(X).
\end{align*}
The leading order Chern character $
ch^\Psi:K^\Psi_N(X)\to H^{\textrm{ev}}(X)$ is defined to be the composition
$$ch^\Psi= ch^{\mathcal{G}_\sharp}\circ\sigma _0.$$
For $ch^\Psi$ on $K^\Psi(X)$, we extend $\sigma _0$ by
$$\sigma_0(a_1\otimes\ldots\otimes a_k)=\sigma_0(a_1)\otimes\ldots
\otimes\sigma_0(a_k),$$
and applying $ch^{\mathcal{G}_\sharp}$ as before.

\subsection{\textbf{Nontrivial Elements in $K^\Psi$}}
We can detect nonzero elements of $K^\psi_N(X)$ using
the symbol map, the leading order Chern character and the ordinary Chern character.
This extends techniques in \cite{ALH1} which produced nontrivial examples
of bundles in Vect${}^\Psi_N(X).$

An element of $\mathcal{G}(N,E)$ is an invertible pseudodifferential operator of order zero acting on $\Gamma^s(N,E)$, so we have a canonical inclusion
\begin{align*}
j: \textrm{Vect}^\mathcal{G}_N(X)\hookrightarrow \textrm{Vect}^\Psi_N(X).
\end{align*}
The projection $\pi: S^*N\to N$ induces a pullback map $$\pi^*:\mathcal{G}(N,E)\to\mathcal{G}(S^*N,\pi^*E).$$
Taking pullbacks of transition maps, this induces
$
\pi^*:\textrm{Vect}^\mathcal{G}_N(X)\rightarrow \textrm{Vect}^{\mathcal{G}_\sharp}_{S^*N}(X)
$ and
$$\pi^*:K^\mathcal{G}_N(X)\rightarrow K^{\mathcal{G}_\sharp}_{S^*N}(X).$$
The following diagram commutes:
%\begin{align}
$$\label{giantdiag1}
\begin{diagram}
\node{K^\Psi_N(X)}\arrow{e,t}{\sigma_0}\node{K^{\mathcal{G}_\sharp}_{S^*N}(X)}\arrow{e,t}{
ch^{\mathcal{G}_\sharp}}
\node{H^*(X)}\\
\node{}\node{K_N^\mathcal{G}(X)}\arrow{nw,r}{j}\arrow{n,r}{\pi^*}\arrow{e,b}{
ch^\mathcal{G}}
\node{H^*(X)}\arrow{n,r}{\cdot {\rm vol}(S^*N)}
\end{diagram}
$$
%\end{align}
The  vertical map on the right is multiplication by the constant $\textrm{vol}(S^*N)$.
The diagram shows that for  $\vartheta\in K^\mathcal{G}_N(X)$ with
 nonvanishing leading order Chern character,
  $\pi^*(\vartheta)\in K^{\mathcal{G}_\sharp}_{S^*N}(X)$
   and $j(\vartheta)\in K^\Psi_N(X)$ are nontrivial.
For example,  replacing $S^1$ in \S\ref{exloop} by $N$, we see that
if $CH(T_\mathbb{C}X)\neq 0$ then  $[\mathcal{F}]\in K^\mathcal{G}_N(X)$ is nonzero.
Thus  $CH(T_\mathbb{C}X)\neq 0$ implies $\pi^*(\mathcal{F})\in K_{S^*N}^{\mathcal{G}_\sharp}(X)$ and $j(\mathcal{F})\in K^\Psi_N(X)$ are nontrivial.

We have  $K^{\mathcal{G}_\sharp}_{S^*N}(X)\cong K(X\times S^*N)$ and $K^{\mathcal{G}}_N(X)\cong K(X\times N)$. By  the  K\"unneth formula, $H^*(X\times S^*N)\cong H^*(X)\otimes H^*(S^*N)$  and
$H^*(X\times N)\cong H^*(X)\otimes H^*(N)$. The corresponding commutative diagram for
$CH$ is
\begin{align}
\begin{diagram}\label{giantdiag2}
\node{K^\Psi_N(X)\otimes\mathbb{C}}\arrow{e,t}{\sigma_0}\node{K(X\times S^*N)\otimes\mathbb{C}}\arrow{e,tb}{
CH}{\cong}
\node{H^*(X)\otimes H^*(S^*N)}\\
\node{}\node{K(X\times N)\otimes\mathbb{C}}\arrow{nw,r}{j}\arrow{n,r}{\pi^*}\arrow{e,tb}{
CH}{\cong}
\node{H^*(X)\otimes H^*(N)}\arrow{n,r}{1\otimes \pi^*}
\end{diagram}
\end{align}
Let dim$(N)=2l=n$ and let $[w_N]$ a generator of $H^{2l}(N)$. Set
$$\textit{T}=CH^{-1}\left(H^*(X)\otimes\left(\bigoplus_{i=0}^{\textrm{dim}N-1}H^i(N)\right)\right).$$
In other words, $\textit{T}$ is the subset of $K^\mathcal{G}_N(X)\otimes \mathbb{C}$ consisting of $\gamma$
such that $CH(\gamma)$ does not contain a term of the form $a\otimes [w_N]$ for $a\in H^*(X)$.

 We now prove that  $K^\mathcal{G_\sharp}_{S^*N}(X)$ and
 $K^\Psi_N(X)\otimes \mathbb C$ is at least as large as $T$.

 \begin{theo}\label{lastthm}

\begin{enumerate}[a)]
\item $\pi^*:T\to K^\mathcal{G_\sharp}_{S^*N}(X)\otimes\mathbb{C}$ and $j:T\to K_N^\Psi(X)\otimes\mathbb{C}$ are injections.
 \item If $\chi(N)=0$, then $\pi^*:K^{\mathcal{G}}_{N}(X)\otimes\mathbb{C}\to K^{\mathcal{G}_\sharp}_{S^*N}(X)\otimes\mathbb{C}$ and $j:K^{\mathcal{G}}_{N}(X)\otimes\mathbb{C}\to K^{\Psi}_{N}(X)\otimes\mathbb{C}$ are injections.
\end{enumerate}
\end{theo}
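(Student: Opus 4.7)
The plan is to reduce everything to a cohomological injectivity statement via the commutative diagram (\ref{giantdiag2}), and then to resolve that by the Gysin sequence of the cosphere bundle $\pi: S^*N \to N$.

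First I would record the key compatibility $\sigma_0 \circ j = \pi^*$, which is immediate: a gauge transformation $g \in \mathcal{G}(N,E)$, when regarded as an element of $\overline{\Psi}_0^*(N,E)$ via $j$, is a (zero-order) multiplication operator whose leading symbol is the pullback $\pi^* g \in \mathcal{G}(S^*N, \pi^*E)$. Consequently, once I have proved that $\pi^*$ is injective on the relevant subset, injectivity of $j$ on that subset follows automatically (if $j(\gamma) = 0$ then $\pi^*(\gamma) = \sigma_0(j(\gamma)) = 0$). So it suffices to handle $\pi^*$.

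Next, diagram (\ref{giantdiag2}) together with the K\"unneth theorem and the fact that $CH$ is an isomorphism after tensoring with $\mathbb{C}$ identifies
$$\pi^*: K^{\mathcal{G}}_N(X) \otimes \mathbb{C} \longrightarrow K^{\mathcal{G}_\sharp}_{S^*N}(X) \otimes \mathbb{C}$$
with the map
$$1 \otimes \pi^*: H^*(X) \otimes H^*(N) \longrightarrow H^*(X) \otimes H^*(S^*N),$$
whose kernel is $H^*(X) \otimes \ker\bigl(\pi^*: H^*(N) \to H^*(S^*N)\bigr)$. To compute the latter, I invoke the Gysin sequence of the oriented rank-$n$ sphere bundle $S^*N \to N$:
$$\cdots \to H^{k-n}(N) \xrightarrow{\cup\, e} H^k(N) \xrightarrow{\pi^*} H^k(S^*N) \to H^{k-n+1}(N) \to \cdots,$$
where $e = e(T^*N) \in H^n(N)$. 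Since $N$ is a closed oriented $n$-manifold, $H^j(N) = 0$ for $j < 0$ or $j > n$, and $e = \chi(N)\,[w_N]$ in $H^n(N;\mathbb{C}) \cong \mathbb{C}$, with $[w_N]$ the generator dual to the fundamental class. Therefore $\ker(\pi^*|_{H^k(N)}) = 0$ for $k < n$, while $\ker(\pi^*|_{H^n(N)}) = \mathbb{C}\cdot e = \chi(N)\,\mathbb{C}[w_N]$.

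Both parts now drop out. For (a), an element $\gamma \in T$ has $CH(\gamma) \in H^*(X) \otimes \bigoplus_{i < n} H^i(N)$ by the very definition of $T$, and this subspace meets $H^*(X) \otimes H^n(N)$ trivially; combined with the kernel computation above, $CH(\gamma) \in \ker(1 \otimes \pi^*)$ forces $CH(\gamma) = 0$, hence $\gamma = 0$. For (b), $\chi(N) = 0$ makes $\ker(\pi^*: H^*(N) \to H^*(S^*N))$ itself trivial, so $1 \otimes \pi^*$ is injective on all of $H^*(X) \otimes H^*(N)$, giving injectivity of $\pi^*$ on $K^{\mathcal{G}}_N(X) \otimes \mathbb{C}$. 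In both cases the corresponding statement for $j$ follows from $\sigma_0 \circ j = \pi^*$. The only real subtlety is the identification $\sigma_0 \circ j = \pi^*$ at the level of cocycles; once that is nailed down, the remainder is a mechanical application of Gysin, so there is no genuine obstacle.
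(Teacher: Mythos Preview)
Your proposal is correct and follows essentially the same route as the paper: both reduce via diagram (\ref{giantdiag2}) to the injectivity of $1\otimes\pi^*$ on cohomology and then read this off from the Gysin sequence of $S^*N\to N$. Your explicit identification $\sigma_0\circ j=\pi^*$ and your computation of $\ker(\pi^*)$ as $\chi(N)\,\mathbb{C}[w_N]$ are exactly what underlie the paper's argument, just spelled out a bit more carefully.
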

\begin{proof}
(a) The Gysin sequence gives
\begin{align*}
\ldots\to H^{2k-n}(N)\stackrel{\cup e}{\to} H^{2k}(N)
\stackrel{\pi^*}{\to} H^{2k}(S^*N)\to H^{2k-n+1}(N)\ldots,
\end{align*}
where $e$ is the Euler class of $S^*N$.
In particular, for $k<l$, $\pi^*:H^{2k}(N)
\to H^{2k}(S^*N)$ is an injection.

If $CH(\gamma)$ does not contain a term of the form
$a\otimes [w_N] $, then $(1\otimes \pi^*)(CH(\gamma))
\in H^*(X)\otimes H^*(S^*N)$ is nonzero.  By the commutativity of
(\ref{giantdiag2}),
   $\pi^*(\gamma)\in K(X\times S^*N)\otimes\mathbb{C}=K^\mathcal{G_\sharp}_{S^*N}(X)\otimes\mathbb{C}$ and $j(\gamma)\in K^\Psi_N(X)\otimes\mathbb{C}$ are nonzero.

   (b)
If $\chi(N)=0$ then $\pi^*:H^{2l}(N)
\to H^{2l}(S^*N)$ is an injection. In this case, $1\otimes \pi^*$ is injective and the argument in (a) extends from $T$ to all of $K^{\mathcal{G}}_{N}(X)\otimes \mathbb C$.
\end{proof}

Since $CH$ is an isomorphism, the nontrivial elements in $K^\Psi_N(X)\otimes\mathbb{C}$
given in Theorem \ref{lastthm} can be identified with most  of
$H^*(X\times N).$
\begin{coro}\label{finalcoro}
A copy of $H^*(X)\otimes\left(\bigoplus_{i=0}^{\textrm{dim} \ N-1}H^i(N)\right)$ injects into $K^\Psi_N(X)\otimes\mathbb{C}$.
\end{coro}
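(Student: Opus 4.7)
The plan is to obtain the corollary as a direct transport of Theorem \ref{lastthm}(a) along the complex Chern character isomorphism. First, I would observe that combining Lemma \ref{Ktheqgau} with the classical fact that the Chern character is a rational (hence complex) isomorphism gives
$$CH\colon K^\mathcal{G}_N(X)\otimes\mathbb{C}\;\xrightarrow{\;\cong\;}\;H^{\mathrm{ev}}(X\times N,\mathbb{C}),$$
and by the K\"unneth formula the target decomposes as $H^*(X)\otimes H^*(N)$. Setting $V:=H^*(X)\otimes\bigoplus_{i=0}^{\dim N-1}H^i(N)$, the subspace $V\subset H^{\mathrm{ev}}(X\times N,\mathbb{C})$ pulls back under $CH^{-1}$ to a subspace of $K^\mathcal{G}_N(X)\otimes\mathbb{C}$ that is precisely the set $T$ defined just before Theorem \ref{lastthm}, and the restriction $CH|_T\colon T\to V$ is a linear isomorphism.

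Next, I would invoke Theorem \ref{lastthm}(a), which already supplies the injection $j\colon T\hookrightarrow K^\Psi_N(X)\otimes\mathbb{C}$. Composing with $(CH|_T)^{-1}$ then produces the desired embedding
$$V=H^*(X)\otimes\left(\bigoplus_{i=0}^{\dim N-1}H^i(N)\right)\;\xrightarrow{\;j\,\circ\,(CH|_T)^{-1}\;}\;K^\Psi_N(X)\otimes\mathbb{C},$$
which is the statement of the corollary.

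There is essentially no new obstacle to overcome: all the substantive work was carried out in the proof of Theorem \ref{lastthm}, where the Gysin sequence was used to show that the map $1\otimes\pi^*$ is injective on $H^*(X)\otimes\bigoplus_{i<\dim N}H^i(N)$, and this injectivity was lifted through the commutative diagram (\ref{giantdiag2}) to conclude that $j$ is injective on $T$. The corollary simply repackages this result: because $CH$ identifies $T$ with $V$ as $\mathbb{C}$-vector spaces, the injection $j|_T$ and the injection of $V$ into $K^\Psi_N(X)\otimes\mathbb{C}$ are one and the same statement, so the proof reduces to spelling out this identification.
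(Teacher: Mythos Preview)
Your proposal is correct and follows exactly the approach the paper has in mind: the sentence immediately preceding the corollary (``Since $CH$ is an isomorphism, the nontrivial elements in $K^\Psi_N(X)\otimes\mathbb{C}$ given in Theorem \ref{lastthm} can be identified with most of $H^*(X\times N)$'') is precisely the observation you spell out, namely that $CH$ identifies $T$ with $V=H^*(X)\otimes\bigoplus_{i<\dim N}H^i(N)$ and then Theorem \ref{lastthm}(a) supplies the injection $j|_T$. There is nothing to add.
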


\bibliography{bibliopgd}

\begin{thebibliography}{10}

\bibitem{AB1}
M.~F. Atiyah and R.~Bott.
\newblock The {Y}ang-{M}ills {E}quations {O}ver {R}iemann {S}urfaces.
\newblock {\em Philos. Trans. Roy. Soc. London Ser. A}, 308(1505):523--615,
  1983.

\bibitem{AS4}
M.~F. Atiyah and I.~M. Singer.
\newblock The {I}ndex of {E}lliptic {O}perators. {IV}.
\newblock {\em Ann. of Math. (2)}, 93:119--138, 1971.

\bibitem{AS}
M.~F. Atiyah and I.~M. Singer.
\newblock Dirac {O}perators {C}oupled to {V}ector {P}otentials.
\newblock {\em Proc. Nat. Acad. Sci. U.S.A.}, 81(8, Phys. Sci.):2597--2600,
  1984.

\bibitem{BGV}
N.~Berline, E.~Getzler, and M.~Vergne.
\newblock {\em Heat {K}ernels and {D}irac {O}perators}.
\newblock Grundlehren Text Editions. Springer-Verlag, Berlin, 2004.

\bibitem{BF}
J.M. Bismut and D.~Freed.
\newblock The {A}nalysis of {E}lliptic {F}amilies. {I}. {M}etrics and
  {C}onnections on {D}eterminant {B}undles.
\newblock {\em Comm. Math. Phys.}, 106(1):159--176, 1986.

\bibitem{HMV}
P.~Hekmati, M.~K. Murray, and R.~F. Vozzo.
\newblock The {G}eneral {C}aloron {C}orrespondence.
\newblock arXiv:1105.0805v2. 2011.

\bibitem{Hor}
L.~H{\"o}rmander.
\newblock {\em The {A}nalysis of {L}inear {P}artial {D}ifferential {O}perators.
  {III}}.
\newblock Classics in Mathematics. Springer, Berlin, 2007.

\bibitem{Mor}
N.~Kitchloo, G.~Laures, and W.~S. Wilson.
\newblock The {M}orava {K}-{T}heory of {S}paces {R}elated to {$BO$}.
\newblock {\em Adv. Math.}, 189(1):192--236, 2004.

\bibitem{Ko}
A.~Kono and D.~Tamaki.
\newblock {\em Generalized {C}ohomology}, volume 230 of {\em Translations of
  Mathematical Monographs}.
\newblock American Mathematical Society, Providence, 2006.

\bibitem{ALH1}
A.~Larrain-Hubach, S.~Rosenberg, S.~Scott, and F.~Torres-Ardila.
\newblock Characteristic {C}lasses and {Z}eroth {O}rder {P}seudodifferential
  {O}perators.
\newblock In {\em Spectral Theory and Geometric Analysis}, volume 535 of {\em
  Contemporary Mathematics}, pages 141--158. AMS, Providence, 2010.

\bibitem{ALH2}
A.~Larrain-Hubach, S.~Rosenberg, S.~Scott, and F.~Torres-Ardila.
\newblock In preparation.
\newblock 2011.

\bibitem{LN}
M.~Lesch and C.~Neira.
\newblock Classification of {T}races and {H}ypertraces on {S}paces of
  {C}lassical {P}seudodifferential {O}perators.
\newblock arXiv:1011.6663v1. 2010.

\bibitem{RMT}
Y.~Maeda, S.~Rosenberg, and F.~Torres-Ardila.
\newblock Riemannian {G}eometry on {L}oop {S}paces.
\newblock arXiv:0705.1008v3. 2010.

\bibitem{EfP}
E.~Park.
\newblock {\em Complex {T}opological {K}-{T}heory}, volume 111 of {\em
  Cambridge Studies in Advanced Mathematics}.
\newblock Cambridge University Press, Cambridge, 2008.

\bibitem{Pay}
S.~Paycha and S.~Rosenberg.
\newblock Traces and {C}haracteristic {C}lasses on {L}oop {S}paces.
\newblock In {\em Infinite {D}imensional {G}roups and {M}anifolds}, volume~5 of
  {\em IRMA Lect. Math. Theor. Phys.}, pages 185--212. de Gruyter, Berlin,
  2004.

\end{thebibliography}

\end{document}